\documentclass[a4paper]{siamltex}

\usepackage{amsmath,amssymb,amsfonts}
\usepackage{comment,url}
\usepackage{pgfplots}
\usepackage{pgfplotstable}
\usepackage{booktabs}
\usepackage{tabularx}

\newtheorem{remark}{Remark}

\usepackage{comment,url}
\usepackage{pgfplots}
\usepackage{pgfplotstable}
\usepackage{booktabs}
\usepackage{tabularx}

\newcommand{\norm}[1]{\lVert #1 \rVert}
\newcommand{\Z}{\mathbb Z}
\newcommand{\w}{{_{_\mathcal W}}}
\newcommand{\ww}{{_{_{\mathcal W_1}}}}

\newcommand{\f}{{_{_\mathcal F}}}
\newcommand{\aqt}{{_{_\mathcal{CQT}}}}
\newcommand{\qt}{{_{_\mathcal{QT}}}}
\newcommand{\wt}{\widetilde}
\newcommand{\trunc}{\hbox{trunc}}
\pgfplotstableset{
  every head row/.style={before row=\toprule,after row=\midrule},
  clear infinite
}

\begin{document}
\title{Semi-Infinite Quasi-Toeplitz Matrices with Applications to QBD Stochastic Processes\thanks{Work supported by GNCS of INdAM.}}
\author{Dario A. Bini
\thanks{Dipartimento di Matematica, Universit\`a di Pisa, ({\tt dario.bini@unipi.it})}
\and 
Stefano Massei
\thanks{Scuola Normale Superiore, Pisa ({\tt stefano.massei@sns.it})}
\and
Beatrice Meini
\thanks{Dipartimento di Matematica, Universit\`a di Pisa, ({\tt beatrice.meini@unipi.it})}}

\maketitle

\begin{abstract}
Denote by $\mathcal{W}_1$ the set of complex valued functions of the form
$a(z)=\sum_{i=-\infty}^{+\infty}a_iz^i$ which are continuous on the unit circle, and such that $\sum_{i=-\infty}^{+\infty}|ia_i|<\infty$. We call CQT matrix a quasi-Toeplitz matrix $A$, associated with a continuous symbol $a(z)\in\mathcal W_1$,  of the form $A=T(a)+E$, where
  $T(a)=(t_{i,j})_{i,j\in\Z^+}$ is the semi-infinite Toeplitz matrix
  such that $t_{i,j}=a_{j-i}$, for $i,j\in\mathbb Z^+$, and
  $E=(e_{i,j})_{i,j\in\Z^+}$ is a semi-infinite matrix such that
  $\sum_{i,j=1}^{+\infty}|e_{i,j}|$ is finite.
    We prove that the class of CQT matrices is a Banach algebra with a suitable sub-multiplicative matrix norm
  $\|\cdot\|$. We introduce a finite representation of CQT matrices together with
  algorithms which implement elementary matrix operations.  An
  application to solving quadratic matrix equations of the kind
  $AX^2+BX+C=0$, encountered in the solution of Quasi-Birth and Death
  (QBD) stochastic processes with a denumerable set of phases, is
  presented where $A,B,C$ are CQT matrices.
\end{abstract}

\maketitle

\section{Introduction}
Toeplitz matrices, i.e., matrices of the kind $T=(t_{i,j})$ such that
$t_{i,j}=a_{j-i}$ for some sequence $\{a_k\}_{k\in\Z}$, are
encountered in many applications.
In certain stochastic processes, like in the analysis of random walks
in the quarter plane \cite{fayolle}, or in the analysis of the tandem Jackson queue models \cite{jackson1957networks}, 
\cite{Sakuma-Miyazawa}, one 
typically encounters semi-infinite Toeplitz matrices, where the
indices of the entries range in the set $\mathbb Z^+$ of positive
integers \cite{Sakuma-Miyazawa}, \cite{miyazawa2011light}, \cite{kobayashi2012revisit}, \cite{Takahashi}. In fact,
these applications are modeled by a  block tridiagonal generator $Q$  of the form
\[
Q=\begin{bmatrix}\widehat A_0&\widehat A_1\\
A_{-1}&A_0&A_1\\
      &A_{-1}&A_0&A_1\\
      &&\ddots&\ddots&\ddots\end{bmatrix}
\]
where the blocks $A_{-1}$, $A_0$, $A_1$, $\widehat A_0$, $\widehat
A_1$ are  semi-infinite tridiagonal and quasi Toeplitz matrices. More
specifically, they can be written as the sum of a tridiagonal Toeplitz matrix and a correction, that is,
$
A_i=\hbox{trid}(\mu_i,\sigma_i,\nu_i)+F_i
$, $i=-1,0,1$,
$
\widehat A_i=\hbox{trid}(\widehat \mu_i,\widehat \sigma_i,\widehat \nu_i)+\widehat F_i
$, $i=0,1$. Here,
 $\hbox{trid}(\mu,\sigma,\nu)$ denotes a semi-infinite tridiagonal
Toeplitz matrix with sub-diagonal, diagonal and super-diagonal entries
$\mu,\sigma,\nu$, respectively, and $F_i$, $\widehat F_i$ denote
matrices which are  possibly nonzero only in the entries of indices
$(1,1)$ and $(1,2)$.
Generators $Q$ in block tridiagonal form characterize the very wide class of Quasi-Birth-and-Death (QBD) processes \cite{lr:book}.
Observe that the Toeplitz part $\hbox{trid}(\mu,\sigma,\nu)$ is uniquely
determined by the Laurent polynomial 
$a(z)=z^{-1}\mu+ \sigma+z\nu$
which is called the {\em symbol} associated with the Toeplitz matrix.
 
An important problem in the analysis of a QBD process is to compute the minimal non-negative solutions $G$ and $R$ of the associated matrix equations  $A_{-1}+A_0X+A_1X^2=0$ and $X^2A_{-1}+XA_0+A_1=0$, respectively, see for instance \cite{lr:book}, \cite{bini2005numerical}.
If the matrix size is finite, the algorithms of Cyclic Reduction and of Logarithmic Reduction can be effectively used to solve these equations. Other techniques based on fixed point iterations can be used as well.
Theoretically, these algorithms can also be used  in the case where matrices are
semi-infinite.  However, in this case, difficult computational issues are encountered  because performing arithmetic operations between Toeplitz
matrices, generally causes the loss of
sparsity and of the Toeplitz structure. This creates the nontrivial problem of storing infinite matrices, with apparently no structure, by means of a finite number of parameters. Another interesting issue is to figure out if the solutions $R$ and $G$ of the associated matrix equations share, in some form, part of the Toeplitz structure.

Let $\mathcal W$ be the Wiener class formed by the complex valued functions $a(z)=\sum_{i\in\mathbb Z}a_iz^i$, defined on the unit circle, such that $\|a\|\w:=\sum_{i\in\Z}|a_i|$ is finite. Moreover, define $\mathcal W_1\subset\mathcal W$ the subclass of functions $a(z)$ which are continuous and such that $a'(z)=\sum_{i\in\mathbb Z}ia_iz^{i-1}\in\mathcal W$.

In this work we introduce the class $\mathcal{QT}$ of semi-infinite Quasi-Toeplitz (QT) matrices, that is, matrices 
of the form $A=T(a)+E$ where $T(a)$ is the Toeplitz matrix
associated with the symbol $a(z)\in\mathcal W$, and the correction $E=(e_{i,j})$ is such that $\|E\|\f:=\sum_{i,j\in\Z^+}|e_{i,j}|$ is finite as well. This class provides a generalization of the structure encountered in QBD problems where the symbol $a(z)$ is a Laurent polynomial and the correction matrix has only a finite number of nonzero entries.

We prove that $\mathcal{QT}$ with the norm
$\|A\|\qt=\|a\|\w+\|E\|\f$ 
is a Banach space but is not closed
under matrix product. Instead, the subclass $\mathcal{CQT}$ formed by matrices in
$\mathcal {QT}$ associated with a continuous symbol $a(z)\in\mathcal W_1$, forms a Banach algebra with the norm
$\|\cdot\|\aqt$  such that $\|T(a)+E\|\aqt:=\|a\|\w+\|a'\|\w+\|E\|\f$ and
$\|AB\|\aqt\le\|A\|\aqt\|B\|\aqt$
for any $A,B\in\mathcal{CQT}$.

 A nice property of the class $\mathcal{QT}$ is that for any $A\in\mathcal{QT}$ and for any $\epsilon>0$ there
 exists $B\in\mathcal{CQT}$, determined by a {\em finite number of parameters}, such that $\|A-B\|\qt\le\epsilon$. This fact allows us to represent any matrix in $\mathcal{QT}$ with a finite number of parameters up to an arbitrarily small error in the QT-norm. We also
 introduce algorithms that execute the arithmetic operations between
 CQT matrices, and provide their Matlab implementation.
 This way, we may extend standard algorithms, valid for finite matrices, to the case of CQT matrices. In particular, we show how the algorithm of Cyclic Reduction \cite{bini2009cyclic}  can be adapted to solve
quadratic matrix equations
 of the kind $AX^2+BX+C=0$, where $A,B,C\in\mathcal{CQT}$, which are encountered in QBD processes modeling random walks in
 the quarter plane \cite{fayolle}, and  the Jackson Tandem Queue
 \cite{jackson1957networks}, \cite{taylor}. 
Some numerical experiments performed with a set of problems presented in 
\cite{taylor} show the effectiveness of our approach.

The decomposition of a matrix as the sum of a Toeplitz part plus a correction has been used in the literature in different contexts. For instance, in \cite[Example 2.28]{bottcher2005spectral} 
matrices of the form $T(a)+E$ are considered where $E$ is a compact operator with finite $\ell^2$ operator norm. It is worth pointing out that the boundedness of $\|E\|_2$ does not imply that $\|E\|\f<\infty$ which is required for our computational goals. In the framework of Toeplitz preconditioning and in the analysis of asymptotic spectral properties of finite Toeplitz sequences the decomposition of a Toeplitz matrix in the form $T(a)+E+R$ is considered where $T(a)$ is banded, $E$ and $R$ are corrections of small norm and small rank, respectively; among the many papers on this subject we cite  \cite{dibenede}, \cite{serra}, \cite{tyrty} with the many related references.

The analysis and the tools presented in this paper can be used for the
effective numerical computation of matrix functions expressed by means
of a Taylor expansion, like the exponential function, or expressed by
means of an integral representation. These applications are shown in detail
in \cite{bmm1}, \cite{bm:exp}. In particular, in \cite{bmm1} it is shown how this machinery can be extended to the case where matrices are finitely large. The problem of computing the exponential of finite Toeplitz matrices has been recently investigated in \cite{kressner} relying on the concept of displacement rank.

It is worth pointing out that the definition of matrix function of a CQT matrix $A$, as well as the algorithms implementing the CQT matrix arithmetic, are somehow related to the decay properties of the coefficients of the matrices $T(f(a))$ and $E_{f(a)}$ such that $f(A)=T(f(a))+E_{f(a)}$, and also to the numerical rank of the product of two Hankel matrices associated with analytic functions. The analysis of decay properties of matrix functions and of the singular values of some structured matrices, having a displacement rank structure, have recently received much interest and have been investigated in \cite{becker}, \cite{BeBo14}, \cite{BeSi15}, \cite{posi16}.

The paper is organized as follows. In Section \ref{sec:prel} we recall some preliminary properties which are needed in our analysis. In Section \ref{sec:main} we prove that $\mathcal{CQT}$ is a Banach algebra. In Section \ref{sec:arith} we describe the way in which matrix operations can be defined and implemented in $\mathcal{CQT}$ and report a few notes on our Matlab implementation of CQT-arithmetic. In Section \ref{sec:qbd} 
we present an application to solving a matrix equation encountered in QBD stochastic processes together with the results of some numerical experiments which confirm the effectiveness of the class $\mathcal{CQT}$. Section \ref{sec:conc} draws the conclusions.

\section{Notation and preliminaries} \label{sec:prel}

Denote by $\mathbb T=\{z\in\mathbb C:~ |z|=1\}$ the unit circle in the
complex plane, and by $\mathcal W$ the Wiener class formed by the
functions $a(z)=\sum_{i=-\infty}^{+\infty}a_iz^i:\mathbb T\to\mathbb
C$ such that $\sum_{i=-\infty}^{+\infty}|a_i|<+\infty$.  Recall that
$\mathcal W$ is a Banach algebra, that is, a vector space closed under
multiplication, endowed with the norm $\|a\|\w:=\sum_{i\in\mathbb Z}|a_i|$ which makes the space complete and
such that $\|ab\|\w\le\|a\|\w\|b\|\w$  for any $ a(z),b(z)\in\mathcal W$.
We refer the reader to the first chapter of the book \cite{bottcher2005spectral} for more details.

In the following, we denote by $a^+(z)$ and by $a^-(z)$ the power series
defined by the coefficients of $a(z)$ with positive and with negative powers, respectively, that is, 
$a^+(z)=\sum_{i\in\mathbb Z^+}a_iz^i$ and 
$a^-(z)=\sum_{i\in\mathbb Z^+}a_{-i}z^{i}$, so that $a(z)=a_0+a^+(z)+a^-(z^{-1})$. 
We associate with the Laurent series $a(z)$, and with the power series $b(z)=\sum_{i=0}^\infty b_iz^i$  the 
following semi-infinite matrices
\[
\begin{array}{lll}
T(a)=(t_{i,j})_{i,j}, & t_{i,j}=a_{j-i},\\[1ex]
H(b)=(h_{i,j})_{i,j},& h_{i,j}=b_{i+j-1},&\quad i,j\in\Z^+,
\end{array}
\]
respectively. Observe that $T(a)$ is a Toeplitz matrix while $H(b)$ is  Hankel.

Finally, denote by $\mathcal F$
the class of semi-infinite matrices $F=(f_{i,j})_{i,j\in\mathbb Z^+}$
such that $\|F\|\f:=\sum_{i,j\in\mathbb Z^+}|f_{i,j}|$ is finite.
The norm that we use in this case is just the 1-norm if we look at the matrix $F$ as an infinite vector.

Observe that $\mathcal F$ is a vector space, closed under rows-by-columns multiplication,  and $\|F\|\f$ is a norm over $\mathcal F$ which is endowed of the sub-multiplicative property. In the following, we write $(\mathcal F,\|\cdot\|\f)$ to denote the linear space $\mathcal F$ endowed with the norm $\|\cdot\|\f$. We have the following:

\begin{lemma}\label{lem:1}
$(\mathcal F,\|\cdot\|\f)$ equipped with matrix sum and multiplication is a Banach algebra over $\mathbb C$.
\end{lemma}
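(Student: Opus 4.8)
The plan is to verify, in sequence, the three defining properties of a Banach algebra over $\mathbb{C}$: that $(\mathcal{F},\|\cdot\|\f)$ is a normed vector space, that it is complete (hence a Banach space), and that the norm is sub-multiplicative with respect to matrix multiplication. The excerpt already asserts that $\mathcal{F}$ is a vector space, that it is closed under multiplication, that $\|\cdot\|\f$ is a norm, and that this norm is sub-multiplicative, so the genuine content left to prove is \emph{completeness}; the algebraic and norm axioms are routine and I would dispose of them in a sentence.

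First I would set up the key identification: since $\|F\|\f=\sum_{i,j\in\mathbb{Z}^+}|f_{i,j}|$ is exactly the $\ell^1$-norm of the entries of $F$ viewed as a doubly-indexed sequence, the map $F\mapsto (f_{i,j})_{i,j}$ is an isometric isomorphism of the normed space $(\mathcal{F},\|\cdot\|\f)$ onto $\ell^1(\mathbb{Z}^+\times\mathbb{Z}^+)$. Completeness of $(\mathcal{F},\|\cdot\|\f)$ is then immediate, because $\ell^1$ of any countable index set is a classical Banach space. This reduces the whole Banach-\emph{space} claim to a well-known fact, and is the cleanest route; alternatively one can prove completeness directly by taking a Cauchy sequence $\{F^{(n)}\}$, observing that each entry sequence $\{f^{(n)}_{i,j}\}_n$ is Cauchy in $\mathbb{C}$ hence converges to some $f_{i,j}$, and then checking via a standard $\ell^1$ argument that $F=(f_{i,j})\in\mathcal{F}$ and $\|F^{(n)}-F\|\f\to0$.

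Next I would address the algebra structure. Matrix sum is the usual componentwise sum, under which $\mathcal{F}$ is clearly a vector space, and multiplication is the rows-by-columns product. The two points requiring a word of care are that the product $FG$ of two matrices in $\mathcal{F}$ is \emph{well-defined} (each entry $(FG)_{i,j}=\sum_k f_{i,k}g_{k,j}$ being an absolutely convergent series) and that it again lies in $\mathcal{F}$; both follow from the sub-multiplicative estimate $\|FG\|\f\le\|F\|\f\|G\|\f$, which I would derive from $\sum_{i,j}|\sum_k f_{i,k}g_{k,j}|\le\sum_{i,j}\sum_k|f_{i,k}||g_{k,j}|=\big(\sum_{i,k}|f_{i,k}|\big)\big(\sum_{k,j}|g_{k,j}|\big)$ by the triangle inequality and Tonelli/Fubini for nonnegative terms. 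Associativity of the product and the distributive and bilinearity laws then hold by the same absolute-convergence justification permitting rearrangement of sums.

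The only genuine obstacle is the completeness argument, and even there the difficulty is mild: the subtle point is confirming that the candidate limit $F=(f_{i,j})$, obtained as an entrywise limit, actually has finite $\mathcal{F}$-norm and that the convergence holds in $\|\cdot\|\f$ rather than merely entrywise. I would handle this by the standard device of fixing a finite truncation to $\{1,\dots,N\}^2$, using that $\sum_{i,j\le N}|f^{(n)}_{i,j}-f^{(m)}_{i,j}|\le\|F^{(n)}-F^{(m)}\|\f<\varepsilon$ for $n,m$ large, letting $m\to\infty$ on the finite sum, and then letting $N\to\infty$ by monotone convergence to conclude $\|F^{(n)}-F\|\f\le\varepsilon$. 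Invoking the $\ell^1$ isometry makes this step a citation rather than a computation, so I would prefer that formulation and keep the proof short.
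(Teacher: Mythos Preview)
Your proposal is correct and follows essentially the same route as the paper: identify $(\mathcal F,\|\cdot\|\f)$ isometrically with $\ell^1$ over the index set $\mathbb Z^+\times\mathbb Z^+$ to get completeness, and bound $\|FG\|\f$ by the product of the norms via the triangle inequality and nonnegativity. One small slip: in your sub-multiplicativity line the ``$=$'' should be ``$\le$'', since the index $k$ is shared on the left but separated on the right (the paper handles this by writing $\sum_r\alpha_r\beta_r\le(\sum_r\alpha_r)(\sum_r\beta_r)$ for $\alpha_r=\sum_i|f_{i,r}|$, $\beta_r=\sum_j|g_{r,j}|$).
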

\begin{proof}
We need to show that given $E,F\in\mathcal F$ and $\alpha\in\mathbb C$ it holds
\begin{itemize}
\item[(i)] $\alpha E\in\mathcal F$,
\item[(ii)] $E+F\in\mathcal F$,
\item[(iii)] $EF\in\mathcal F$ and  $\norm{EF}\f\leq \norm{E}\f\norm{F}\f$,
\item[(iv)] $(\mathcal F,\|\cdot\|\f)$ is a complete metric space.
\end{itemize}
Clearly, $\sum_{i,j\in\mathbb Z^+}|\alpha e_{i,j}|=|\alpha|\sum_{i,j\in\mathbb Z^+}|e_{i,j}|<+\infty$ 
which proves (i). By the triangular inequality one obtains that  $\sum_{i,j\in\mathbb Z^+}|e_{i,j}+f_{i,j}|\le\sum_{i,j\in\mathbb Z^+}|e_{i,j}|+\sum_{i,j\in\mathbb Z^+} |f_{i,j}|<+\infty$ which implies (ii). If $H=EF=(h_{i,j})$ then  $h_{i,j}=\sum_{r\in\mathbb Z^+}e_{i,r}f_{r,j}$ so that, defining $\alpha_r=\sum_{i\in\mathbb Z^+}|e_{i,r}|$, and $\beta_r=\sum_{j\in\mathbb Z^+}|f_{r,j}|$, for the quantity $\|EF\|\f=\sum_{i,j\in\mathbb Z^+} |h_{i,j}|$ we have
\[
\|EF\|\f\le\sum_{i,j,r\in\mathbb Z^+}
|e_{i,r}|\cdot|f_{r,j}|=
\sum_{r\in\mathbb Z^+}\alpha_r\beta_r\le
\left(\sum_{r\in\mathbb Z^+}\alpha_r\right)\left(\sum_{r\in\mathbb Z^+}\beta_r\right)=\|E\|\f\cdot\|F\|\f,
\]
which shows (iii). Finally, we observe that any matrix $E\in\mathcal F$ can be viewed as a vector $v=(v_k)_{k\in\mathbb Z^+}$ obtained by suitably ordering the entries $e_{i,j}$. 
Moreover, the norm $\|\cdot\|\f$ corresponds to the $\ell^1$ norm in the space of infinite vectors  having finite sum of their moduli. This way, the space $\mathcal F$ actually coincides with $\ell^1$, which is a Banach space.
Thus, we get (iv).
\end{proof}

Observe that the condition $\|F\|\f<+\infty$ implies that for any $\epsilon>0$ there exists an integer $k>0$ such that $\sum_{i,j\ge k}|f_{i,j}|<\epsilon$, that is, the entries of the matrix $F$ decay to zero as $i, j\to\infty$ so that $F$ can be approximated with an arbitrarily small error by a matrix with finite support. This property is of fundamental importance in order to represent the matrix $F$ with a finite number of parameters up to an error which is  negligible with respect to the round-off error.

Any semi-infinite matrix $S=(s_{i,j})_{i,j\in\mathbb Z^+}$ can be viewed as a linear operator, acting on semi-infinite vectors $v=(v_i)_{i\in\mathbb Z^+}$, which maps the vector $v$ onto the vector $u$ such that
$u_i=\sum_{j\in\mathbb Z^+}s_{i,j}v_j$, provided that the results of the summations are finite. 

Indeed, the matrices $F\in\mathcal F$ define linear operators on the space $\ell^1$ of semi-infinite  vectors $v=(v_i)$ such that $\|v\|_1=\sum_{i\in\mathbb Z^+}|v_i|$ is finite, since
\[
\sum_{i\in\mathbb Z^+}|\sum_{j\in\mathbb Z^+}f_{i,j}v_j|\le
\sum_{i,j\in\mathbb Z^+}|f_{i,j}v_j|\le \sum_{i,j\in\mathbb Z^+}|f_{i,j}|\cdot\sup_k |v_k|
\]
which is finite as the product of two finite terms.

For any integer $p\ge 1$, we may wonder if also the matrices $T(a)$, $H(a^+)$ and $H(a^-)$
define linear operators acting on the Banach space $\ell^p$ formed by vectors $v$ such that
the $\ell^p$ norm $\| v\|_p=(\sum_{i\in\mathbb Z^+}|v_i|^p)^{1/p}$ is finite. In this case we may evaluate the $p$-norm of the operator $S$ (operator norm) as $\|S\|_p:=\sup_{\|v\|_p=1}\|Sv\|_p$. The answer to this question is given by the following result of \cite{bottcher2005spectral}
which relates the matrix $T(a)T(b)$ with $T(ab)$, $H(a^-)$ and $H(a^+)$.

\begin{theorem}\label{th:1}
For $a(z),b(z)\in\mathcal W$ let $c(z)=a(z)b(z)$. Then we have
\[
T(a)T(b)=T(c)-H(a^-)H(b^+).
\]
Moreover, for any $a(z)\in\mathcal W$  and for any $p\ge 1$, including $p=\infty$, we have 
\[
\|T(a)\|_p\le \|a\|\w,\quad \|H(a^-)\|_p\le\|a^-\|\w,\quad \|H(a^+)\|_p\le\|a^+\|\w.
\] 
\end{theorem}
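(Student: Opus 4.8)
The plan is to prove the matrix identity $T(a)T(b)=T(c)-H(a^-)H(b^+)$ by a direct entrywise computation, and then to derive the operator-norm bounds from the fact that the symbols lie in the Wiener class $\mathcal W$. These two halves are essentially independent, so I would treat them in sequence.

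For the identity, I would fix indices $i,j\in\Z^+$ and compute the $(i,j)$ entry of $T(a)T(b)$ as $\sum_{k\in\Z^+}a_{k-i}\,b_{j-k}$. The key observation is that the corresponding $(i,j)$ entry of $T(c)$ is $c_{j-i}=\sum_{k\in\Z}a_{k-i}b_{j-k}$, where the summation index $k$ now ranges over all of $\Z$ rather than only $\Z^+$. Hence the difference $[T(c)-T(a)T(b)]_{i,j}$ equals the sum over the missing indices $k\le 0$, namely $\sum_{k\le 0}a_{k-i}b_{j-k}$. Re-indexing with $r=1-k\ge 1$ turns this into $\sum_{r\in\Z^+}a_{-(i+r-1)}\,b_{j+r-1}$. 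Recalling the definitions $H(a^-)_{i,r}=(a^-)_{i+r-1}=a_{-(i+r-1)}$ and $H(b^+)_{r,j}=(b^+)_{r+j-1}=b_{r+j-1}$, this is exactly the $(i,j)$ entry of $H(a^-)H(b^+)$, which gives the claimed formula. I would also note that all these series converge absolutely because $a,b\in\mathcal W$, so the rearrangements and the interchange of summation order are justified.

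For the norm bounds I would argue by a density/approximation or by a direct convolution estimate. The clean route is to write $T(a)=\sum_{i\in\Z}a_i\,T(z^i)$, where $T(z^i)$ is the shift-type Toeplitz matrix associated with the monomial $z^i$; each such shift has operator $p$-norm equal to $1$ for every $p$ including $p=\infty$, since it merely relocates entries of a vector without altering their moduli. By the triangle inequality for the operator norm, $\|T(a)\|_p\le\sum_{i\in\Z}|a_i|\,\|T(z^i)\|_p=\sum_{i\in\Z}|a_i|=\|a\|\w$, and the series converges precisely because $a\in\mathcal W$. The same decomposition works for the Hankel matrices: writing $H(a^-)$ and $H(a^+)$ as norm-convergent sums of elementary Hankel shifts, each of operator $p$-norm at most $1$, yields $\|H(a^-)\|_p\le\|a^-\|\w$ and $\|H(a^+)\|_p\le\|a^+\|\w$.

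I expect the main obstacle to be bookkeeping rather than conceptual difficulty: getting the index shifts exactly right in the Hankel identification (the $i+j-1$ convention) and making sure the boundary case $k=0$ is assigned to the correct side of the split. A secondary technical point is justifying that the infinite series defining $T(a)$ as a sum of shifts converges in operator norm and not merely entrywise; this follows because the partial sums form a Cauchy sequence in operator norm, as $\|\sum_{|i|>N}a_i T(z^i)\|_p\le\sum_{|i|>N}|a_i|\to 0$ by the Wiener-class assumption. I would verify the $p=\infty$ endpoint explicitly, since the supremum norm behaves slightly differently from the finite-$p$ cases, though the shift bound $\|T(z^i)\|_\infty=1$ still holds and the argument is unchanged.
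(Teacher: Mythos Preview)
The paper does not prove this theorem at all: it is quoted as a known result from B\"ottcher--Grudsky, \emph{Spectral properties of banded Toeplitz matrices}, so there is no in-paper proof to compare against. Your argument is correct and is essentially the standard one found in that reference: the entrywise computation for the identity (splitting the convolution index $k$ into $k\ge 1$ and $k\le 0$ and recognising the Hankel product in the second piece) is exactly how the formula is usually derived, and your norm estimate via the decomposition into elementary shifts of unit operator norm is the standard route to the bounds $\|T(a)\|_p\le\|a\|\w$ and $\|H(a^\pm)\|_p\le\|a^\pm\|\w$.

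Two very minor points worth tightening if you write this up. First, when you say the elementary Hankel blocks have operator $p$-norm at most $1$, it is worth stating explicitly what these blocks are: the matrix $H_k$ with ones on the anti-diagonal $i+j-1=k$ acts by $(H_k v)_i=v_{k+1-i}$ for $i\le k$ and $0$ otherwise, so it simply reverses the first $k$ coordinates and is therefore a partial isometry on every $\ell^p$. Second, your justification of absolute convergence in the identity is fine, but you may want to note that the double sum $\sum_{k\in\Z}|a_{k-i}||b_{j-k}|$ is bounded by $\|a\|\w\|b\|\w$, which makes the rearrangement and the interchange of summations with the split at $k=0$ completely routine.
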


A direct consequence of the above result is that the product of two Toeplitz matrices can be written as a Toeplitz matrix plus a correction whose $\ell^p$-norm is bounded by $\|a\|\w\|b\|\w$.

A similar property holds for matrix inversion in the case where the function $a(z)$ is nonzero for $|z|=1$ and its winding number is zero.
In fact, in this case we may apply another classical result (we refer to the book \cite{boett-grudsky-1} for more details) which relates the invertibility of the operator $T(a)$ to the winding number $\kappa$ of $a(z)$, that is, the (integer) number of times that the complex number $a(\cos\theta+
\mathbf{i} \sin \theta)$, where $\mathbf{i}^2=-1$,  winds around the origin as $\theta$ moves from $0$ to $2\pi$.

\begin{theorem} [Gohberg 1952]\label{th:goh} Let $a(z)$ be  a continuous function from $\mathbb T$ in $\mathbb C$. Then the linear operator $T(a)$ is invertible if and only if the winding 
number of $a(z)$ is zero and $a(z)$ does not vanish on $\mathbb T$.
\end{theorem}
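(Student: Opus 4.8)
The plan is to read invertibility of $T(a)$ through Fredholm theory and to split the statement into two independent pieces: a \emph{Fredholm criterion} asserting that $T(a)$ is Fredholm if and only if $a$ does not vanish on $\mathbb T$, and an \emph{index formula} asserting that the Fredholm index of $T(a)$ equals $-\kappa$, where $\kappa$ is the winding number of $a$. Granting these, the theorem follows from the general principle that a bounded operator is invertible iff it is Fredholm of index zero with trivial kernel, together with Coburn's lemma, which supplies the missing injectivity automatically for Toeplitz operators once the index is zero. So the chain I aim to establish is: $T(a)$ invertible $\iff$ $T(a)$ Fredholm and $\mathrm{ind}\,T(a)=0$ $\iff$ $a$ non-vanishing and $\kappa=0$.

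The key device linking everything to the material already available is Theorem~\ref{th:1}. First I would record that the Hankel operators $H(a^-)$ and $H(a^+)$ are compact on $\ell^2$ because $a$ is continuous: for a Laurent polynomial $a$ the matrices $H(a^\pm)$ have finite rank, and for $a\in\mathcal W$ one approximates $a$ by Laurent polynomials $a_n$ and uses $\|H(a^- - a_n^-)\|_2\le\|a^- - a_n^-\|\w\to 0$ from Theorem~\ref{th:1}, exhibiting $H(a^-)$ as a norm-limit of finite-rank operators (in the merely continuous case this is Hartman's theorem). Consequently Theorem~\ref{th:1} gives, for all $a,b$,
\[
T(a)T(b)-T(ab)=-H(a^-)H(b^+)\in\mathcal K,
\]
where $\mathcal K$ is the ideal of compact operators, so $a\mapsto T(a)+\mathcal K$ is an algebra homomorphism into the Calkin algebra. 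For the \emph{if} half of the Fredholm criterion: if $a$ does not vanish then $1/a$ is continuous, and the displayed identity makes $T(a)T(1/a)-I$ and $T(1/a)T(a)-I$ compact, so $T(1/a)$ is a Fredholm regularizer for $T(a)$. For the \emph{only if} half I would argue that if $a(z_0)=0$ then $0\in a(\mathbb T)$, and since the above homomorphism identifies the essential spectrum of $T(a)$ with $a(\mathbb T)$, the operator cannot be Fredholm, hence not invertible; this shows non-vanishing is necessary.

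For the index I would use homotopy invariance: on the open set of non-vanishing continuous symbols, $a\mapsto T(a)$ is norm-continuous into the Fredholm operators, so the integer-valued index is constant on path components. A non-vanishing $a$ with winding number $\kappa$ can be written $a(z)=z^\kappa e^{g(z)}$ with $g$ continuous, because $z^{-\kappa}a$ has winding number zero and therefore a continuous logarithm; contracting $g$ to $0$ deforms $a$ to $z^\kappa$ within non-vanishing functions, whence $\mathrm{ind}\,T(a)=\mathrm{ind}\,T(z^\kappa)=-\kappa$, the last equality being the elementary shift computation (for $\kappa\ge 0$, $T(z^\kappa)$ is injective with $\kappa$-dimensional cokernel, and symmetrically for $\kappa<0$). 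Finally, Coburn's lemma says that for a nonzero symbol at least one of $\ker T(a)$, $\ker T(a)^*$ is trivial, so index zero forces both to vanish and $T(a)$ is invertible. This closes the equivalence.

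I expect the genuine obstacles to be two operator-theoretic inputs rather than the algebra, which is routine once Theorem~\ref{th:1} is in hand: establishing the \emph{converse} Fredholm direction, i.e. that a vanishing symbol destroys the Fredholm property (this really requires pinning down the essential spectrum as $a(\mathbb T)$), and the index machinery (homotopy invariance plus Coburn's lemma). As an alternative that sidesteps abstract Fredholm theory for $a\in\mathcal W$, one can build an explicit Wiener--Hopf factorization $a=a_-\,z^\kappa\,a_+$ with $a_\pm^{\pm1}\in\mathcal W$ analytic inside/outside the disk, obtained from the plus/minus splitting of $\log(z^{-\kappa}a)$; the triangular-Toeplitz multiplication rules contained in Theorem~\ref{th:1} then factor $T(a)$ into invertible triangular factors times $T(z^\kappa)$, so $T(a)$ is invertible precisely when $\kappa=0$.
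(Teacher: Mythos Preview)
The paper does not prove this theorem at all: it is quoted as a classical result of Gohberg (1952) with a reference to the book \cite{boett-grudsky-1} for details, and no proof environment follows the statement. So there is nothing in the paper to compare against beyond the citation.

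Your proposal is the standard textbook proof (essentially the one found in B\"ottcher--Grudsky or Douglas): compactness of the Hankel pieces from Theorem~\ref{th:1} turns $a\mapsto [T(a)]$ into a $*$-homomorphism into the Calkin algebra, giving the Fredholm criterion; homotopy through $z^{-\kappa}a = e^{g}$ computes the index as $-\kappa$; and Coburn's lemma converts ``Fredholm of index zero'' into ``invertible''. The plan is sound and you have correctly identified the two places where real work hides. For the converse Fredholm direction you gesture at ``the homomorphism identifies the essential spectrum with $a(\mathbb T)$'', but note this is not automatic from multiplicativity alone: you need that the $*$-homomorphism $C(\mathbb T)\to \mathcal B/\mathcal K$ is \emph{injective} (equivalently, that a compact Toeplitz operator has zero symbol), after which isometry of injective $*$-homomorphisms between $C^*$-algebras gives $\sigma_{\mathrm{ess}}(T(a))=a(\mathbb T)$. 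That injectivity step is easy but should be made explicit. Your alternative Wiener--Hopf route for $a\in\mathcal W$ is exactly Theorem~\ref{th:whf} of the paper and is the path the authors actually exploit later, in~\eqref{eq:inv}, to write $T(a)^{-1}$ explicitly; it is more constructive but covers a narrower class of symbols than the Fredholm argument.
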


Thus, under the assumptions of the above theorem, it follows that
$T(a)$ is invertible and  we have 
$
T(a)^{-1}=T(a^{-1})+E,
$
where $\|E\|_p$ is bounded from above by a constant \cite[Proposition 1.18]{bottcher2005spectral}.

In the analysis that we are going to perform in the next section, the
above properties concerning the $\ell^p$ norms are very useful, but
are not enough to arrive at an algorithmic implementation concerning
Toeplitz and quasi-Toeplitz matrices. In fact, our request is to write
the product and the inverse of Toeplitz matrices as a Toeplitz matrix
plus a correction whose entries have a decay along the diagonals. In
fact, in this case, the correction can be approximated with any
precision by using a finite number of parameters.
Observe that, for the matrix product, this property is satisfied if $E=H(a^-)H(b^+)\in\mathcal F$ in view of Theorem \ref{th:1}.

Finally, we recall a result concerning the Wiener-Hopf factorization of $a(z)$ which will be useful next.

\begin{theorem}\label{th:whf}
Let $a(z)\in\mathcal W$ be a function which does not vanish for $z\in\mathbb T$ and such that its winding number is $\kappa$. Then $a(z)$ admits the Wiener-Hopf factorization
\[
a(z)=u(z)z^\kappa\ell(z),
\]
where $u(z)=\sum_{i=0}^\infty u_iz^i$, $\ell(z)=\sum_{i=0}^\infty \ell_iz^{-i}$ are in $\mathcal W$ and $u(z)$, $\ell(z^{-1})$ do not vanish in the closed unit disk.
If $\kappa=0$ the factorization is said canonical.  
\end{theorem}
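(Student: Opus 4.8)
The plan is to reduce to the canonical case and then build $u$ and $\ell$ by exponentiating the two ``halves'' of a logarithm of the symbol. First I would set $b(z)=z^{-\kappa}a(z)$. Since multiplication by $z^{-\kappa}$ has winding number $-\kappa$ and the winding number is additive under products, $b\in\mathcal W$ is nonvanishing on $\mathbb T$ with winding number $0$. If I obtain a canonical factorization $b(z)=u(z)\ell(z)$ with the stated analyticity and non-vanishing properties, then $a(z)=z^\kappa b(z)=u(z)z^\kappa\ell(z)$ is exactly the desired factorization. Hence it suffices to treat the case $\kappa=0$.

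The central step is to produce a logarithm of $b$ that still lies in $\mathcal W$. Because $b$ does not vanish on $\mathbb T$ and has winding number $0$, the net change of the argument of $b$ along $\mathbb T$ is $2\pi\cdot 0=0$, so a continuous single-valued branch $c(z)=\log b(z)$ exists on $\mathbb T$. The delicate claim, which I expect to be the main obstacle, is that $c\in\mathcal W$ rather than merely being continuous on $\mathbb T$. I would resolve this with the Wiener--L\'evy theorem, equivalently the holomorphic functional calculus in the commutative Banach algebra $\mathcal W$: its maximal ideal space is $\mathbb T$ and the Gelfand transform of $b$ at the character associated with $z_0\in\mathbb T$ is $b(z_0)$, so the spectrum of $b$ coincides with the range $b(\mathbb T)$. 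This range is a compact subset of $\mathbb C\setminus\{0\}$ that does not wind around the origin, hence it is contained in a simply connected open set $\Omega$ with $0\notin\Omega$ on which a holomorphic branch of $\log$ is defined; the functional calculus then yields $c=\log b\in\mathcal W$.

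With $c\in\mathcal W$ in hand, I would split its Laurent coefficients, writing $c^+(z)=\sum_{i\ge 0}c_iz^i$ and $c^-(z)=\sum_{i\ge 1}c_{-i}z^{-i}$ (putting the constant term into $c^+$), so that $c=c^++c^-$ with $c^+,c^-\in\mathcal W$. I would then define $u(z)=\exp(c^+(z))$ and $\ell(z)=\exp(c^-(z))$. Since $\mathcal W$ is a Banach algebra the exponential series converges in norm, whence $u,\ell\in\mathcal W$; moreover every power of $c^+$ involves only nonnegative powers of $z$ and every power of $c^-$ only negative powers, so $u(z)=\sum_{i\ge 0}u_iz^i$ and $\ell(z)=\sum_{i\ge 0}\ell_iz^{-i}$ have exactly the prescribed forms (with $u_0=\exp(c_0)\ne 0$ and $\ell_0=1$). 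Commutativity gives $u(z)\ell(z)=\exp(c^+(z)+c^-(z))=\exp(c(z))=b(z)$.

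It remains to check non-vanishing on the closed disk, which is routine. The coefficients of $c^+$ are absolutely summable and supported on nonnegative indices, so $c^+$ extends to a function analytic in the open unit disk and continuous on its closure; hence $u(z)=\exp(c^+(z))$ is analytic, continuous, and nowhere zero on $\{|z|\le 1\}$. Likewise $c^-(z^{-1})=\sum_{i\ge 1}c_{-i}z^{i}$ is analytic in the open disk and continuous on its closure, so $\ell(z^{-1})=\exp(c^-(z^{-1}))$ does not vanish on $\{|z|\le 1\}$. Assembling the pieces gives $a(z)=u(z)z^\kappa\ell(z)$ with all the claimed properties, the factorization being canonical precisely when $\kappa=0$.
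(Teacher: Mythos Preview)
Your argument is correct and is the standard classical proof of the Wiener--Hopf factorization in $\mathcal W$: reduce to winding number zero, apply the Wiener--L\'evy theorem (holomorphic functional calculus in the commutative Banach algebra $\mathcal W$, whose maximal ideal space is $\mathbb T$) to obtain $\log b\in\mathcal W$, split the logarithm into its analytic and co-analytic parts, and exponentiate.

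Note, however, that the paper does not actually prove this theorem. It is stated in the preliminaries section as a classical result being \emph{recalled} for later use, with no accompanying proof. So there is no ``paper's own proof'' to compare against; you have supplied a complete (and standard) proof where the authors simply cite the result.
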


\section{Quasi-Toeplitz matrices}\label{sec:main}
In this section we introduce the classes  of quasi-Toeplitz  matrices and  analyze their properties.

\begin{definition}\label{def:qt}
We say that the semi-infinite matrix $A$  is a {\em quasi-Toeplitz matrix (QT-matrix)} if it can be written in the form
\[
A=T(a)+E,
\]
where $a(z)=\sum_{i=-\infty}^{+\infty} a_iz^i$ is in the Wiener class,
and $E=(e_{i,j})\in\mathcal F$.  We refer to $T(a)$ as the Toeplitz
part of $A$, and to $E$ as the correction. We denote by $\mathcal{QT}$ the class of QT-matrices.
Moreover we define the following norm on $\mathcal{QT}$
\[
\norm{T(a)+E}\qt:=\norm{a}\w+\norm{E}\f.
\] 
\end{definition}

Observe that given $A\in\mathcal{QT}$ there is a unique way to decompose it in the sense of Definition~\ref{def:qt}. In fact, suppose by contradiction that there exist $a_1(z),a_2(z)\in\mathcal W$ and $E_1,E_2\in\mathcal F$ with  $a_1\neq a_2$ and $E_1\neq E_2$  such that
\[
A=T(a_1)+E_1=T(a_2)+E_2.
\]
Then we should have $E_1-E_2=T(a_2)-T(a_1)=T(a_2-a_1)$, hence
$\| E_1-E_2\|_{\mathcal{F}}=\| T(a_2-a_1)\|_{\mathcal{F}}$. On the other hand, since $T(a_2-a_1)\neq 0$ we have $\| T(a_2-a_1)\|_{\mathcal{F}}=\infty$, which contradicts the fact that $E_1-E_2\in\mathcal F$.

\begin{lemma}
The set $\mathcal{QT}$ endowed with the norm
$
\|\cdot\|\qt
$
is a Banach space.
\end{lemma}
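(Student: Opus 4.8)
The plan is to identify $\mathcal{QT}$, as a normed space, with the direct sum of the two Banach spaces $\mathcal W$ and $\mathcal F$ equipped with the sum norm, and then invoke the standard fact that such a direct sum is complete. The key enabling observation, already established in the paragraph preceding the statement, is that every $A\in\mathcal{QT}$ admits a \emph{unique} decomposition $A=T(a)+E$ with $a\in\mathcal W$ and $E\in\mathcal F$. Thus the map $\Phi:\mathcal{QT}\to\mathcal W\times\mathcal F$, $\Phi(A)=(a,E)$, is a well-defined linear bijection, and by definition $\norm{A}\qt=\norm{a}\w+\norm{E}\f$ is precisely the sum norm of $\Phi(A)$; in other words, $\Phi$ is an isometry onto $\mathcal W\times\mathcal F$ carrying the sum norm.

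First I would verify that $\mathcal{QT}$ is a vector space and that $\norm{\cdot}\qt$ is genuinely a norm. Closure under sums and scalar multiples is inherited from $\mathcal W$ and $\mathcal F$, since $T(\cdot)$ is linear; homogeneity and the triangle inequality for $\norm{\cdot}\qt$ follow termwise from the corresponding properties of $\norm{\cdot}\w$ and $\norm{\cdot}\f$. Positive definiteness is the place where uniqueness of the decomposition is used: if $\norm{A}\qt=0$, then $\norm{a}\w=0$ and $\norm{E}\f=0$, forcing $a=0$ and $E=0$, hence $A=0$.

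For completeness, I would take a Cauchy sequence $\{A_n\}\subset\mathcal{QT}$ and write $A_n=T(a_n)+E_n$. From the identity $\norm{A_n-A_m}\qt=\norm{a_n-a_m}\w+\norm{E_n-E_m}\f$, it follows immediately that $\{a_n\}$ is Cauchy in $\mathcal W$ and $\{E_n\}$ is Cauchy in $\mathcal F$. Since $\mathcal W$ is complete (being a Banach algebra) and $(\mathcal F,\norm{\cdot}\f)$ is complete by Lemma~\ref{lem:1}, there exist $a\in\mathcal W$ and $E\in\mathcal F$ with $\norm{a_n-a}\w\to 0$ and $\norm{E_n-E}\f\to 0$. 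Setting $A:=T(a)+E\in\mathcal{QT}$, one obtains $\norm{A_n-A}\qt=\norm{a_n-a}\w+\norm{E_n-E}\f\to 0$, so $A_n\to A$ in $\mathcal{QT}$, which proves completeness.

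The argument is short, and the only genuinely delicate point is the well-definedness of the norm, which rests entirely on the uniqueness of the $T(a)+E$ decomposition established just before the statement; this in turn relies on the fact that a nonzero Toeplitz matrix $T(a_2-a_1)$ has infinite $\mathcal F$-norm and therefore cannot lie in $\mathcal F$. Once that is in hand, completeness is a routine consequence of the completeness of the two component spaces, and I do not expect any further obstacle.
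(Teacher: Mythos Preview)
Your proof is correct and follows essentially the same approach as the paper: both identify $\mathcal{QT}$ with the direct sum $\mathcal W\oplus\mathcal F$ via the unique decomposition and invoke completeness of the factors under the sum norm. Your version is simply more explicit, spelling out the norm axioms and the Cauchy-sequence argument that the paper compresses into a single sentence.
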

\begin{proof}
The set of quasi Toeplitz matrices is clearly isomorphic to the direct sum 
$
\mathcal{QT}\simeq \mathcal W \oplus \mathcal F.
$
Since both $\mathcal W$ and $\mathcal F$ are Banach spaces the composition of the $1$-norm of $\mathbb R^2$ with the vector valued function $T(a)+E\to (\norm{a}\w,\norm{E}\f)$ makes $\mathcal W \oplus \mathcal F$ a complete metric space.
\end{proof}

The class $\mathcal{QT}$ clearly includes all the matrices encountered in QBD processes,  formed by
a banded Toeplitz part, 
and  by a correction $E$ such that $e_{i,j}=0$
for $i,j>k$ for some integer $k$.

The goal of this section is to prove that the subclass of QT-matrices associated with continuous symbols $a(z)$ such that $a'(z)\in\mathcal W$  form a normed matrix algebra, i.e., a vector space
closed under matrix multiplication.  To this end, it is useful to introduce the following sub-algebra of $\mathcal W$.

\begin{definition}
We denote $\mathcal W_1=\{a(z)\in\mathcal W:\quad a(z)\hbox{ continuous, and }a'(z)\in\mathcal W\}$, and define the norm
\[
\|a\|\ww=\|a\|\w+\|a'\|\w.
\]
\end{definition}

We recall that   $\mathcal W_1$ is a Banach algebra with the norm $\|a\|\ww$, see \cite{bottcher2012introduction}.

\begin{definition}\label{def:aqt}
We call CQT-matrix, any matrix $T(a)+E\in\mathcal{QT}$  such that the  symbol $a(z)\in\mathcal W_1$.  We denote by $\mathcal{CQT}$ the subset of $\mathcal{QT}$ formed by CQT-matrices. Moreover, we define the following norm in $\mathcal{CQT}$:
\[
\norm{T(a)+E}\aqt:=\norm{a}\ww+\norm{E}\f.
\]
\end{definition}

Next, we provide a few results which are useful to prove that $\mathcal{CQT}$ is a Banach algebra.
The following lemma shows that the product of two semi-infinite Toeplitz matrices associated with symbols in $\mathcal W_1$ belongs to $\mathcal{CQT}$.
 
\begin{lemma}\label{lem:2}
  Let $a(z),b(z)\in\mathcal W_1$  and set $c(z)=a(z)b(z)$. Then 
  $T(a)T(b)=T(c)+E_c$ where $E_c\in\mathcal F$, moreover,
  \[
  \|E_c\|\f\le \|H(a^-)\|\f\cdot\|H(b^+)\|\f=\sum_{i\in\mathbb Z^+}i|a_{-i}| \sum_{i\in\mathbb Z^+}i|b_{i}|.
  \]
\end{lemma}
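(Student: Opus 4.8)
The plan is to start from the decomposition already available in Theorem~\ref{th:1}, namely $T(a)T(b)=T(c)-H(a^-)H(b^+)$ with $c(z)=a(z)b(z)$, and simply identify $E_c=-H(a^-)H(b^+)$. Since $\mathcal W_1$ is closed under multiplication, we have $c\in\mathcal W_1\subset\mathcal W$, so $T(c)$ is a legitimate Toeplitz part; the only real content is to prove that the correction $H(a^-)H(b^+)$ lies in $\mathcal F$ and to establish the stated norm bound. By the sub-multiplicativity of $\|\cdot\|\f$ proved in Lemma~\ref{lem:1}(iii), it suffices to show that each Hankel factor individually belongs to $\mathcal F$ and to compute $\|H(a^-)\|\f$ and $\|H(b^+)\|\f$ explicitly; the inequality $\|E_c\|\f\le\|H(a^-)\|\f\cdot\|H(b^+)\|\f$ then follows immediately.

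The heart of the argument is therefore the explicit evaluation of the $\mathcal F$-norm of a Hankel matrix. Recall that $a^-(z)=\sum_{i\ge 1}a_{-i}z^i$, so by the definition of $H$ the entries of $H(a^-)$ are $h_{i,j}=(a^-)_{i+j-1}=a_{-(i+j-1)}$ for $i,j\in\Z^+$. I would then write out the norm directly:
\[
\|H(a^-)\|\f=\sum_{i,j\in\Z^+}|h_{i,j}|=\sum_{i,j\in\Z^+}|a_{-(i+j-1)}|.
\]
The key combinatorial step is to collect terms by the value of the anti-diagonal index $k=i+j-1$. For each fixed $k\ge 1$, the number of pairs $(i,j)\in\Z^+\times\Z^+$ with $i+j-1=k$ is exactly $k$, since $j$ ranges over $1,\dots,k$ with $i=k-j+1$. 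Hence
\[
\|H(a^-)\|\f=\sum_{k=1}^{\infty}k\,|a_{-k}|=\sum_{i\in\Z^+}i\,|a_{-i}|,
\]
and the analogous computation for $b^+(z)=\sum_{i\ge 1}b_iz^i$ gives $\|H(b^+)\|\f=\sum_{i\in\Z^+}i\,|b_i|$. Finally, the hypothesis $a,b\in\mathcal W_1$ guarantees that $a'(z)=\sum_i ia_i z^{i-1}\in\mathcal W$ and likewise for $b$, which is precisely the statement that $\sum_i i|a_i|<\infty$ and $\sum_i i|b_i|<\infty$; this is what makes both Hankel norms finite, so $H(a^-),H(b^+)\in\mathcal F$ and consequently $E_c\in\mathcal F$.

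I do not expect any genuine obstacle here; the argument is essentially a bookkeeping exercise once the anti-diagonal counting is observed. The one point deserving care is the finiteness: it is exactly the extra regularity built into $\mathcal W_1$ (the condition $a'\in\mathcal W$) rather than mere membership in $\mathcal W$ that forces $\sum_i i|a_{-i}|<\infty$, which explains why the lemma is stated for $\mathcal W_1$ and not for all of $\mathcal W$. This also clarifies, at the structural level, why $\mathcal{QT}$ fails to be an algebra while $\mathcal{CQT}$ succeeds: without the summability of $\{i a_i\}$ the correction $H(a^-)H(b^+)$ need not decay into $\mathcal F$, so the product of two $\mathcal{QT}$ matrices may leave the class.
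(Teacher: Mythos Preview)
Your proposal is correct and follows essentially the same route as the paper: identify $E_c=-H(a^-)H(b^+)$ via Theorem~\ref{th:1}, compute each Hankel $\mathcal F$-norm by the anti-diagonal substitution $k=i+j-1$ to obtain $\sum_k k|a_{-k}|$ and $\sum_k k|b_k|$, invoke $a,b\in\mathcal W_1$ for finiteness, and conclude with the sub-multiplicativity from Lemma~\ref{lem:1}. Your additional remarks about $c\in\mathcal W_1$ and the structural role of the $\mathcal W_1$ hypothesis are accurate but not needed for the lemma itself.
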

\begin{proof} From Theorem \ref{th:1} we deduce that $T(a)T(b)=T(c)+E_c$ where we set $E_c=-H(a^-)H(b^+)$.
Let us prove that $H(a^-),H(b^+)\in\mathcal F$. 
We have $\|H(b^+)\|\f=\sum_{i,j\in\mathbb Z^+}|b_{i+j-1}|$. Setting $k=i+j-1$ we may write 
$\|H(b^+)\|\f=\sum_{k\in\mathbb Z^+}k|b_{k}|$ which is finite since $b(z)\in\mathcal W_1$.
The same argument applies to $H(a^-)$. In view of Lemma \ref{lem:1}, $\mathcal F$ is a normed matrix algebra therefore $\|E_c\|\f\le\|H(a^-)\|\f\cdot\|H(b^+)\|\f<+\infty$.
\end{proof}

\begin{remark}\label{rem:1}\rm
Observe that the quantities $\sum_{i\in\mathbb Z^+}i|a_{-i}|$ and  $\sum_{i\in\mathbb Z^+}i|b_{i}|$ coincide with the $\mathcal W$-norms of the first derivatives of the functions $a^-(z)$ and $b^+(z)$, respectively. This way we may rewrite the bound given in Lemma \ref{lem:2} as
\begin{equation}\label{eq:boundEc}
\|E_c\|\f\le\|(a^-)'\|\w\|(b^+)'\|\w\le\|a'\|\w\|b'\|\w.
\end{equation}
 \end{remark}
 
The condition $a(z),b(z)\in\mathcal W_1$ is needed to
prove Lemma \ref{lem:2}, as it is demonstrated by the following
example. Consider the case where
$a(z)=\sum_{i=0}^{+\infty}a_{-i}z^{-i}$,
$b(z)=\sum_{i=0}^{+\infty}b_iz^i$, $a_{-i}=b_i=i^{-3/2}$.  Clearly
$a(z),b(z)\in\mathcal W$ but $a(z)'$ and $b(z)'$ are not in $\mathcal W$ since $\sum_{i\in\mathbb Z^+}ia_{-i}$ and $\sum_{i\in\mathbb Z^+}ib_{i}$ are
not convergent. Moreover,
\[
\|H(a^-)H(b^+)\|\f=\sum_{i,j\in\mathbb
  Z^+}\sum_{r=0}^{+\infty}\frac
1{(i+r)^{3/2}}\frac1{(r+j)^{3/2}}=\sum_{r=0}^{+\infty}\left(\sum_{k=r+1}^{+\infty}\frac
  1{k^{3/2}}\right)^2.
\]
 This is the sum of the squares of the remainders of
  the series $\sum_{i=1}^{+\infty}\frac 1{i^{3/2}}$. This sum diverges since these remainders behave like
  $\int_r^{+\infty}\frac 1{x^{3/2}}dx=\frac{2}{\sqrt r}$.

Now we can prove the main result of this section which states that $\mathcal{CQT}$ is closed under multiplication.
 
\begin{theorem}\label{thm:aqt-mult}
  Let $A,B\in \mathcal{CQT}$, where $A=T(a)+E_a$, $B=T(b)+E_b$. Then we have $C=AB=T(c)+E_c\in\mathcal{CQT}$ with
  $c(z)=a(z)b(z)$.  Moreover,
\[
\|E_c\|\f\le \|H(a^-)\|\f\cdot\|H(b^+)\|\f+\|a\|\w \|E_b\|\f+\|b\|\w\|E_a\|\f+\|E_a\|\f\cdot\|E_b\|\f.
\]
\end{theorem}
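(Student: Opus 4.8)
The plan is to expand the product directly. Writing
\[
AB=(T(a)+E_a)(T(b)+E_b)=T(a)T(b)+T(a)E_b+E_aT(b)+E_aE_b,
\]
I would treat the four summands separately. For the first, Lemma~\ref{lem:2} gives $T(a)T(b)=T(c)+E_c'$ with $c(z)=a(z)b(z)$ and $E_c'=-H(a^-)H(b^+)\in\mathcal F$ satisfying $\|E_c'\|\f\le\|H(a^-)\|\f\|H(b^+)\|\f$. Since $\mathcal W_1$ is a Banach algebra and $a,b\in\mathcal W_1$, the product $c=ab$ again lies in $\mathcal W_1$, so $T(c)$ is a legitimate Toeplitz part; it then remains to show that the other three summands lie in $\mathcal F$, after which $E_c=E_c'+T(a)E_b+E_aT(b)+E_aE_b$ is the correction and $C\in\mathcal{CQT}$.

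The heart of the argument, and the step I expect to be the main obstacle, is the two mixed products $T(a)E_b$ and $E_aT(b)$: here $T(a)$ is a genuinely infinite Toeplitz matrix that does \emph{not} belong to $\mathcal F$, so it is not immediate that multiplying by an $\mathcal F$-matrix keeps us in $\mathcal F$. The key observation is that every row and every column of $T(a)$ is absolutely summable with $\ell^1$-mass at most $\|a\|\w$, since for each fixed $r$ the index $r-i$ ranges over a subset of $\Z$ and hence $\sum_{i\in\Z^+}|a_{r-i}|\le\sum_{k\in\Z}|a_k|=\|a\|\w$. Concretely, for $T(a)E_b$ I would write $(T(a)E_b)_{i,j}=\sum_r a_{r-i}(E_b)_{r,j}$ and estimate
\[
\|T(a)E_b\|\f\le\sum_{i,j,r\in\Z^+}|a_{r-i}|\,|(E_b)_{r,j}|=\sum_{r,j\in\Z^+}|(E_b)_{r,j}|\sum_{i\in\Z^+}|a_{r-i}|\le\|a\|\w\|E_b\|\f,
\]
and a symmetric computation, summing over the column index of $T(b)$, yields $\|E_aT(b)\|\f\le\|b\|\w\|E_a\|\f$. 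This simultaneously proves membership in $\mathcal F$ and supplies two of the four terms of the stated bound.

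Finally, for the last summand Lemma~\ref{lem:1} gives directly $E_aE_b\in\mathcal F$ with $\|E_aE_b\|\f\le\|E_a\|\f\|E_b\|\f$. Collecting the four pieces with the triangle inequality for $\|\cdot\|\f$ then delivers
\[
\|E_c\|\f\le\|H(a^-)\|\f\|H(b^+)\|\f+\|a\|\w\|E_b\|\f+\|b\|\w\|E_a\|\f+\|E_a\|\f\|E_b\|\f,
\]
which is exactly the claimed estimate; since each term on the right is finite, $E_c\in\mathcal F$ and hence $C=T(c)+E_c\in\mathcal{CQT}$, completing the proof.
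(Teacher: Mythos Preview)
Your proof is correct and follows essentially the same approach as the paper: expand the product, invoke Lemma~\ref{lem:2} (equivalently Theorem~\ref{th:1}) for $T(a)T(b)$, handle $E_aE_b$ via Lemma~\ref{lem:1}, and prove the two mixed terms satisfy $\|T(a)E_b\|\f\le\|a\|\w\|E_b\|\f$ by the same $\ell^1$ double-sum estimate. The paper obtains the bound for $E_aT(b)$ by transposition rather than repeating the computation, and your explicit remark that $c=ab\in\mathcal W_1$ is a small clarification the paper leaves implicit, but the argument is otherwise identical.
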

\begin{proof}
 We have
$
C=AB=(T(a)+E_a)(T(b)+E_b)  
$.
Applying Theorem \ref{th:1} yields
\[
C=T(c)-H(a^-)H(b^+)+T(a)E_b+E_aT(b)+E_aE_b=:T(c)+E_c,
\]
where
\begin{equation}\label{eq:Ec}
E_c=-H(a^-)H(b^+)+T(a)E_b+E_aT(b)+E_aE_b.
\end{equation}
Therefore, it is sufficient to prove that $\|E_c\|\f$ is finite.
From Lemmas~\ref{lem:2} and \ref{lem:1} it follows that both $\|H(a^-)H(b^+)\|\f$ and $\|E_aE_b\|\f$ are finite.  It remains to show that  $\|E_aT(b)\|\f$ and $\|T(a)E_b\|\f$ are finite. We prove this property only for $\|T(a)E_b\|\f$ since the boundedness of the other matrix norm follows by transposition. In fact, for any $F\in\mathcal F$ one has $\|F\|\f=\|F^T\|\f$ and $T(a)^T=T(\widehat a)$ where $\widehat a(z)=a(z^{-1})$ and $\|a\|\w=\|\widehat a\|\w$. Denote $H=T(a)E_b=(h_{i,j})$ and $E_b=(e_{i,j})$. 
We have $h_{i,j}=\sum_{r=1}^{+\infty}a_{r-i}e_{r,j}$ 
 so that
\[
\|H\|\f=\sum_{i,j\in\mathbb Z^+}|h_{i,j}|\le\sum_{i,j\in\mathbb Z^+}
\sum_{r=1}^{+\infty}|a_{r-i}e_{r,j}|.
\]
Substituting $k=r-i$ yields
\[
\|H\|\f\le\sum_{k\in\mathbb Z}|a_k|\sum_{j=1}^{+\infty}\sum_{i=-k+1}^{+\infty}|e_{k+i,j}|.
\]
Since  $\sum_{j=1}^{+\infty}\sum_{i=-k+1}^{+\infty}|e_{k+i,j}|=
\sum_{j=1}^{+\infty}\sum_{i=1}^{+\infty}|e_{i,j}|=\|E_b\|\f$ for any $k$, we have
\[
\|H\|\f\le\sum_{k\in\mathbb Z}|a_k|\|E_b\|\f=\|a\|\w\|E_b\|\f<+\infty.
\]
Thus, taking norms in \eqref{eq:Ec} yields
\[
\| E_c\|\f\le \|H(a^-)\|\f\cdot\|H(b^+)\|\f+\|a\|\w\|E_b\|\f+\|E_a\|\f\cdot\|b\|\w+\|E_a\|\f\cdot\|E_b\|\f
\]
which completes the proof.
\end{proof}

Observe that in view of Remark \ref{rem:1} we may write
\begin{equation}\label{eq:in1}
\| E_c\|\f\le \|a'\|\w\|b'\|\w+\|a\|\w\|E_b\|\f+\|E_a\|\f\cdot\|b\|\w+\|E_a\|\f\cdot\|E_b\|\f.
\end{equation}

Now, our next goal is to prove that the class $\mathcal{CQT}$ is a Banach  algebra.

\begin{theorem}\label{thm:algebra}
The class $\mathcal{CQT}$ equipped with the norm $\norm{\cdot}\aqt$  is a Banach algebra over $\mathbb C$. Moreover $\|AB\|\aqt\le\|A\|\aqt\|B\|\aqt$ for any matrices $A,B\in\mathcal{CQT}$.
\end{theorem}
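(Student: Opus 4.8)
The plan is to exploit the isomorphism $\mathcal{CQT}\simeq\mathcal W_1\oplus\mathcal F$ together with Theorem~\ref{thm:aqt-mult}, so that the only genuinely new work is verifying sub-multiplicativity of $\|\cdot\|\aqt$. First I would establish the Banach \emph{space} structure exactly as in the proof that $\mathcal{QT}$ is a Banach space: since the decomposition $A=T(a)+E$ is unique, $\mathcal{CQT}$ is linearly isomorphic to $\mathcal W_1\oplus\mathcal F$, and $\norm{T(a)+E}\aqt=\norm{a}\ww+\norm{E}\f$ is just the composition of the $1$-norm of $\mathbb R^2$ with the map $T(a)+E\mapsto(\norm{a}\ww,\norm{E}\f)$. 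Because $\mathcal W_1$ is a Banach algebra (hence complete) and $(\mathcal F,\|\cdot\|\f)$ is complete by Lemma~\ref{lem:1}, the direct sum is complete, giving a Banach space. Closure under matrix multiplication, together with the fact that the product symbol $c=ab$ again lies in $\mathcal W_1$, is precisely the content of Theorem~\ref{thm:aqt-mult}.

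The core of the argument is the sub-multiplicative inequality. Writing $AB=T(c)+E_c$ with $c=ab$, I would split $\norm{AB}\aqt=\norm{c}\ww+\norm{E_c}\f$ and bound the two pieces separately. For the symbol part, the Leibniz rule $c'=a'b+ab'$ together with the Banach algebra property of $\mathcal W$ gives
\[
\norm{c}\ww=\norm{ab}\w+\norm{a'b+ab'}\w\le\norm{a}\w\norm{b}\w+\norm{a'}\w\norm{b}\w+\norm{a}\w\norm{b'}\w.
\]
For the correction part, I would invoke the bound \eqref{eq:in1} already derived from Theorem~\ref{thm:aqt-mult} and Remark~\ref{rem:1}, namely
\[
\norm{E_c}\f\le\norm{a'}\w\norm{b'}\w+\norm{a}\w\norm{E_b}\f+\norm{E_a}\f\norm{b}\w+\norm{E_a}\f\norm{E_b}\f.
\]

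Adding the two estimates, I would then compare the resulting seven-term upper bound for $\norm{AB}\aqt$ with the full nine-term expansion of the product $\norm{A}\aqt\norm{B}\aqt=(\norm{a}\w+\norm{a'}\w+\norm{E_a}\f)(\norm{b}\w+\norm{b'}\w+\norm{E_b}\f)$. A term-by-term check shows that every summand appearing in my bound also appears in this expansion, and the only two missing products, $\norm{a'}\w\norm{E_b}\f$ and $\norm{E_a}\f\norm{b'}\w$, are nonnegative; hence $\norm{AB}\aqt\le\norm{A}\aqt\norm{B}\aqt$, as required. I do not expect a serious obstacle here: the whole of Theorem~\ref{thm:aqt-mult} has already done the hard analytic work of controlling $\norm{E_c}\f$, so the remaining task is essentially the bookkeeping of matching these seven terms against the expanded product and observing that discarding the two cross terms preserves the inequality. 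The one point demanding care is to use the \emph{mixed} bound \eqref{eq:in1} (with $\norm{a'}\w\norm{b'}\w$ in place of $\norm{H(a^-)}\f\norm{H(b^+)}\f$) rather than the sharper Toeplitz form, since it is precisely the appearance of the derivative norms that makes the correction terms line up with the expansion of the product $\norm{A}\aqt\norm{B}\aqt$.
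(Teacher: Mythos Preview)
Your proposal is correct and follows essentially the same approach as the paper: completeness via the direct-sum isomorphism $\mathcal{CQT}\simeq\mathcal W_1\oplus\mathcal F$, closure under multiplication from Theorem~\ref{thm:aqt-mult}, and sub-multiplicativity by bounding $\|ab\|\ww$ via the Leibniz rule, bounding $\|E_c\|\f$ via \eqref{eq:in1}, and then matching the resulting seven terms against the nine-term expansion of $\|A\|\aqt\|B\|\aqt$.
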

\begin{proof}
Theorem~\ref{thm:aqt-mult} ensures the closure of $\mathcal{CQT}$ under matrix multiplication. To prove the sub-multiplicative property of the norm, i.e., \[\|AB\|\aqt\le\|A\|\aqt\cdot\|B\|\aqt\] for any $A,B\in\mathcal{CQT}$, $A=T(a)+E_a$, $B=T(b)+E_b$,  observe that 
\begin{equation}\label{eq:in2}
\begin{split}
\| ab\|\ww=&\|ab\|\w+\|(ab)'\|\w=\|ab\|\w+\|a'b+ab'\|\w\\
\le&\|a\|\w\|b\|\w+\|a'\|\w\|b\|\w+\|a\|\w\|b'\|\w.
\end{split}
\end{equation}
Since
$\|AB\|\aqt=\|ab\|\ww+\|E_c\|\f$, for $c(z)=a(z)b(z)$, and where $E_c$ is defined as in Theorem~\ref{thm:aqt-mult}, by applying \eqref{eq:in1} and
\eqref{eq:in2}   we obtain
\[\begin{split}
\|AB\|\aqt&\le \|ab\|\ww +\|a'\|\w\|b'\|\w+\|a\|\w\|E_b\|\f
 +\|b\|\w\|E_a\|\f+\|E_a\|\f\|E_b\|\f\\
&\le \|a\|\w\|b\|\w+\|a'\|\w\|b\|\w+\|a\|\w\|b'\|\w \\ 
&\quad +\|a'\|\w\|b'\|\w
+\|a\|\w\|E_b\|\f+\|b\|\w\|E_a\|\f+\|E_a\|\f\|E_b\|\f\\
&=(\|a\|\w+\|a'\|\w)(\|b\|\w+\|b'\|\w)\\
&\quad +\|a\|\w\|E_b\|\f+\|b\|\w\|E_a\|\f
 +\|E_a\|\f\|E_b\|\f\\
&\le (\|a\|\ww+ \|E_a\|\f)(\|b\|\ww+ 
\|E_b\|\f)\\&=\|A\|\aqt\|B\|\aqt.
\end{split}
\]
Concerning the completeness, observe that the set of CQT matrices is isomorphic to the direct sum 
$
\mathcal{CQT}\simeq \mathcal W_1 \oplus \mathcal F.
$
Since both $\mathcal W_1$ and $\mathcal F$ are Banach spaces, the composition of the $1$-norm of $\mathbb R^2$ with the vector valued function $T(a)+E\to (\norm{a}\ww,\norm{E}\f)$ makes $\mathcal W_1 \oplus \mathcal F$ a complete metric space.

\end{proof}

Since $\mathcal{CQT}$ is a normed matrix algebra, if $A\in\mathcal{CQT}$ and $B$ is an infinite matrix such that $BA=AB=I$,   then $B\in\mathcal{CQT}$. In the next section we represent the inverse matrix of an infinite Toeplitz matrix $T(a)$ in terms of the Wiener-Hopf factorization of $a(z)$.

\subsection{Matrix inversion}
Assume that $a(z)\in\mathcal W_1$  does not vanishes on the unit circle and its winding number is zero, so that in view of Theorem \ref{th:whf} there exists the canonical Wiener-Hopf factorization $a(z)=u(z)\ell(z)$. From this factorization we deduce the following matrix factorization
\[
T(a)=T(u)T(\ell),
\]
where $T(\ell)$ is lower triangular and $T(u)$ is upper triangular. Since $u(z)$ and $\ell(z^{-1})$ do not vanish in the  unit disk,
 the functions $u(z)$ and $\ell(z)$ have inverse in $\mathcal W_1$, by Theorem \ref{th:1}  are such that $T(u)T(u^{-1})=T(u^{-1})T(u)=I$, and
$T(\ell)T(\ell^{-1})=T(\ell^{-1})T(\ell)=I$, so that
\[
T(a)^{-1}=T(\ell)^{-1}T(u)^{-1}=T(\ell^{-1})T(u^{-1}).
\]

In view of Lemma \ref{lem:2}, we have 
\begin{equation}\label{eq:inv}
T(a)^{-1}=T(a^{-1})-H((\ell^{-1})^-)H((u^{-1})^+)=T(a^{-1})-H(\ell^{-1})H(u^{-1})\in\mathcal{CQT}.
\end{equation}
That is,  a semi-infinite Toeplitz matrix associated with a symbol $a(z)\in\mathcal W_1$,  with null winding number, which does not annihilates in $\mathbb T$, is invertible and its inverse is a CQT-matrix.

This fact, together with the available algorithms to compute the Wiener-Hopf factorization of $a(z)$, enables us to implement the inversion of CQT-matrices in a very efficient manner.
We will see this in the next section.

\section{CQT matrix arithmetic}\label{sec:arith}
The properties that we have described in the previous sections imply that any finite computation which takes as input a set of CQT-matrices and that performs matrix additions, multiplications, inversions, and multiplications by a scalar, generates results that belong to $\mathcal{CQT}$. If the computation can be carried out with no breakdown, say caused by singularity, then the output still belongs to $\mathcal{CQT}$. 

This observation makes it possible to compute functions of semi-infinite CQT-matrices in an efficient way or to solve quadratic matrix equations where the coefficients are CQT-matrices.
In order to do that, we have to provide a simple and effective way of representing, up to an arbitrarily small error, CQT-matrices by means of a finite number of parameters. This is done in this section.

Given a QT-matrix $A=T(a)+E_a$, since the symbol $a(z)$ belongs to the Wiener class, 
and since the correction matrix $E_a$ has entries with finite sum of their
moduli, we may write $A$ through its
{\em truncated form} $\wt A=\trunc(A)$. That is, for any $\epsilon>0$
there exist integers $n_-$, $n_+$, $k_-$, $k_+$ such that
\begin{equation}\label{eq:represent}
\begin{split}
&A=\wt A+\mathcal E_a,\quad \|\mathcal E_a\|\qt\le\epsilon,\\
&\wt A=T(\wt a)+\wt E_a,\\
&\wt a(z)=\sum_{i=-n_-}^{n_+}a_iz^i,
\end{split}\end{equation}
where  $\wt E_a=(\wt e_{i,j})$, is such that $\wt e_{i,j}=e_{i,j}$
for $i=1,\ldots,k_-$, $j=1,\ldots,k_+$, while $\wt e_{i,j}=0$
elsewhere.

In this way, we can approximate any given QT-matrix $A$, to any desired
precision, with a CQT-matrix $\wt A$ where the Toeplitz part is banded
and the correction $\wt E_a$ has a finite dimensional nonzero part. The CQT-matrix $\wt A$
can be easily stored with a finite number of memory locations. The
``finite approximation'' $\wt A$ of a QT-matrix $A$ is the computational
counterpart with which we are going to work in practice.

Observe that, if $A\in\mathcal{CQT}$ and the symbol $a(z)$ is analytic, for the exponential decay of the coefficients $|a_i|$, the values of $n_\pm$ are $O(\log\epsilon^{-1})$. Concerning the values of $k_\pm$, unless we make additional assumptions on the decay of the entries $|e_{i,j}|$ as $i,j$ tend to infinity, the values that $k_\pm$ can assume are as large as $1/\epsilon$.  Think for instance to the case where $e_{i,j}=1/(i+j)^p$ for $p>2$ where $k_\pm$ are of the order of $1/\epsilon^{p-1}$. The same qualitative bounds hold for the coefficients $a_i$ if we simply assume that $a(z)\in\mathcal W_1$.

Here and in the sequel, we do not care much to give {\em a priori} bounds to the values of $n_\pm$ and $k_\pm$ since these values can be determined automatically at run time during the computation.

Another observation concerns the truncated correction $\wt E_a$. In fact, from the
computational point of view, it is convenient to express the matrix
$\wt E_a$ by means of a factorization of the kind $\wt E_a=F_aG_a^T$, where  matrices $F_a$ and $G_a$ have a number of columns given by the
rank of $\wt E_a$ and infinitely many rows. In this way, in presence of low-rank corrections, the storage is reduced together with the computational cost for performing matrix arithmetic. This representation in product form can be obtained by means of SVD up to some error which can be controlled at run time and which can be included in $\mathcal E_a$. Observe also that the truncation operates both on the function $a(z)$ and in the correction $E_a$ by means of compression.

In the following, we represent a QT-matrix $A=T(a)+E_a$ in the form \eqref{eq:represent}
with
$\wt E_a=F_aG_a^T$ where $F_a$ has $f_a$ nonzero rows and $k_a$ columns, $G_a$ has $g_a$ nonzero rows and $k_a$ columns, and the error $\mathcal E_a$ has a sufficiently small norm. 
This way, $\wt E_a$ has $f_a$ nonzero rows, $g_a$ nonzero columns and rank at most $k_a$. 

With this notation we may easily implement the operations of addition, subtraction, multiplication and inversion of two  CQT-matrices $\wt A$, $\wt B$ which are the truncated representations of two QT matrices $A$ and $B$  i.e.,
\[\begin{split}
& A=\wt A+\mathcal E_a,\quad \wt A=\trunc(A)=T(\wt a)+\wt E_a\\
&B=\wt B+ \mathcal E_b,\quad \wt B=\trunc(B)=T(\wt b)+\wt E_b,
\end{split}
\]
denote by $\star$ any arithmetic operation, define $C=A\star B$, $\widehat C=\wt A\star \wt B$ 
and $\widetilde C=\hbox{trunc}(\widehat C)$. 

We define {\em total error} in the operation $\star$ as $\mathcal E^{tot}_c=C-\widetilde C$, the {\em local error} as $\mathcal E^{loc}_c=\widehat C-\widetilde C$ and the {\em inherent error} as $\mathcal E^{in}_c= C-\widehat C$, so that $\mathcal E^{tot}_c=\mathcal E^{in}_c+\mathcal E_c^{loc}$.
Observe that the inherent error is the result of $\mathcal E_a$ and $\mathcal E_b$ through the performed matrix operation, the local error is generated by the truncation of the matrix arithmetic operation $\wt A\star \wt B$, while the total error is the sum of the two errors.
Formally, these errors behave like the inherent error and the round-off error in the standard floating point arithmetic.

In our study we do not analyze the growth of the inherent error in each arithmetic operation, but rather we limit ourselves to operate the truncation and compression  in such a way that the norm of the local error  is bounded by a given value $\epsilon$, say the machine precision. 
Moreover, we do not consider the errors generated by the floating point arithmetic.

\subsection{Addition}
Let $A=\wt A+\mathcal E_a$ and  $B=\wt B+\mathcal E_b$ be CQT matrices where $\wt A=T(\wt a)+\wt E_a$, $\wt B=T(\wt b)+\wt E_b$ with $\wt a(z)$, $\wt b(z)$ Laurent polynomials of degrees $n_a^\pm$ and $n_b^\pm$ respectively, and $\wt E_a=F_aG_a^T$, $\wt E_b=F_bG_b^T$.

If $A$ and $B$ have the above representation, then, for the matrix $C=A+B$ we have the representation
\[C=\wt A+\wt B+\mathcal E_a+\mathcal E_b,
\]
from which we deduce that the inherent error is $\mathcal E^{in}_c=\mathcal E_a+\mathcal E_b$.
On the other hand, concerning $\widehat C=\wt A+\wt B$ we have
\[
\widehat C=T(\wt a+\wt b)+\wt E_a+\wt E_b,
\] 
where $\wt a(z)+\wt b(z)$ is a Laurent polynomial of degrees $n_c^-=\max(n_a^-,n_b^-)$,
$n_c^+=\max(n_a^+,n_b^+)$.
 while 
\[\begin{split}
&E_c=\wt E_a+\wt E_b=F_cG_c^T,\\
&F_c=[ F_a , F_b],\quad G_c=[ G_a , G_b],
\end{split}
\]
 where $f_c=\max(f_a,f_b)$ and $g_c=\max(g_a,g_b)$ are the number of nonzero rows of $F_c$ and $G_c$, respectively,  and $k_c=k_a+k_b$ is the number of columns of $F_c$ and $G_c$.

The Laurent polynomial $\wt a(z)+\wt b(z)$ can be truncated and replaced by a Laurent polynomial $\wt c(z)$ of possibly less degree. Also  
the value of $k_c$, can be reduced and the matrices $F_c$, $G_c$ can be compressed, by using a compression technique which guarantees a local error with norm bounded by a given $\epsilon$. This technique, based on computing SVD and QR factorization is explained in the next section. Denoting by  $\wt F_c$, $\wt G_c$ the matrices obtained after compressing $F_c$ and $G_c$, respectively, we have

\[
\wt C=\hbox{trunc}(\widehat C)=T(\wt c)+\wt E_c+\mathcal E^{loc}_c,\quad \wt E_c=\wt F_c\wt G_c^T,
\]
where $\mathcal E^{loc}_c$ denotes the local error due to truncation and compression, i.e.
$\mathcal E^{loc}_c=\wt A+\wt B-\trunc(\wt A+\wt B)$.
This way we have
\[
A+B=T(\wt c)+\wt E_c+\mathcal E^{loc}_c+\mathcal E^{in}_c.
\]

\subsection{Multiplication}
A similar expression holds for multiplication. For the product $C=AB$ we have the equation
\[
AB=\wt A\wt B+\wt A\mathcal E_b+\mathcal E_a\wt B+\mathcal E_a\mathcal E_b
\]
from which we deduce that the inherent error is $\mathcal E^{in}_c=\wt A\mathcal E_b+\mathcal E_a\wt B+\mathcal E_a\mathcal E_b$. Moreover we have
\[\begin{split}
\widehat C=\wt A\wt B=&T(\wt a)T(\wt b)+T(\wt a)\wt E_b+\wt E_a T(\wt b)+\wt E_a\wt E_b\\
=& T(\wt a\wt b)-H(\wt a^-)H(\wt b^+)+T(\wt a)\wt E_b+\wt E_a T(\wt b)+\wt E_a\wt E_b\\
=:&T(\wt a\wt b)+E_c.
\end{split}
\]

Observe that, since $\wt a^-(z)$ and $\wt b^+(z)$ are polynomials, the matrices $H(\wt a_-)$ and $H(\wt b_+)$ have a finite number of nonzero entries. Therefore, we may factorize the product $H(\wt a^-)H(\wt b^+)$ in the form $FG^T$.
Thus, we find that the matrix $E_c$ can be written as $E_c=F_c G_c^T$ where
\[
F_c=[F,T(\wt a)F_b,F_a],\quad G_c=[G,G_b,T(\wt b)^TG_a+G_b(F_b^TG_a)].
\]
This provides the finite representation of the product $\widehat C=\wt A \wt B$ where
$n^-_c= n^-_a+n^-_b$, $n^+_c= n^+_a+n^+_b$, 
$f_c=\max(f_b+n^-_a,f_a)$, 
$g_c=\max(n^+_b,g_b,g_a+n^-_b)$,
and $k_c=k_a+k_b+n^+_b$. 

Also in this case  we may apply a compression technique, based on SVD for reducing the memory storage of the correction and for reducing the degree of the Laurent polynomial $\wt a(z)\wt b(z)$. Operating in this way, we introduce a local error 
$\mathcal E^{loc}_c=\wt A\wt B-\trunc(\wt A\wt B)$. Denoting by $\wt c(z)$ the truncation of the Laurent polynomial $\wt a(z)\wt b(z)$ and with $\wt F_c\wt G^T_c$  the compression of $F_cG_c^T$, we have
\[
\widehat C=\wt A\wt B = T(\wt c) + \wt F_c\wt G_c^T+\mathcal E^{loc}_c.
\]
This way we have
\[
C=AB=T(\wt c) + \wt F_c\wt G_c^T+\mathcal E^{loc}_c+\mathcal E^{in}_c,
\]
which expresses the result $C$ of the multiplication in terms of the approximated value $\wt C=T(\wt c) +\wt E_c$, the local error $\mathcal E^{loc}_c$ and the inherent error
$\mathcal E^{in}_c$.
The overall error is given by $\mathcal E_c=\mathcal E^{loc}_c+ \mathcal E^{in}_c$.

\subsection{Matrix inversion}
It is worth paying a particular attention to the operation of matrix inversion since it is less immediate than multiplication and addition.

First, we consider the problem of inverting the matrix $A=T(a)$, i.e., we assume that $E_a=0$. The general case will be treated afterwords.

Recall that, if $a(z)\in\mathcal W_1$ does not vanish in the unit circle  and if it has a zero winding number, then  Theorem \ref{th:goh} implies that
the matrix $T(a)$ is invertible and, in view of Theorem \ref{th:whf}, there exists the canonical Wiener-Hopf factorization $a(z)=u(z)\ell(z)$ so that \eqref{eq:inv} holds.
Thus, a finite representation of $A^{-1}$ is obtained by truncating the Laurent series of $1/a(z)$ to a Laurent polynomial and by approximating the Hankel matrices $H((\ell^{-1})^-)$ and $H((u^{-1})^+)$ by means of matrices having a finite number of nonzero entries, an infinite number of rows and the same finite number of columns.  The latter operation can be achieved by truncating the power series $\ell^{-1}(z)$ and $u^{-1}(z)$ to polynomials and by numerically compressing the product of the Hankel matrices obtained this way. This operation can be effectively performed by reducing the Hankel matrices to tridiagonal form by means of Lanczos method with orthogonalization. This procedure takes advantage of the Hankel structure since the matrix-vector product can be computed by means of FFT in $O(n\log n)$ operations where $n$ is the size of the Hankel matrix. The advantage of this compression is that the cost grows as $O(r^2n\log n)$ where $r$ is the numerical rank of the matrix.

If $a(z)$ is analytic in the annulus 
$\mathbb A(r_a,R_a)=\{z\in\mathbb C:\quad r_a<|z|<R_a\}\supset\mathbb T$, 
then its coefficients have an exponential decay so that  
$|a_i^+|\le\gamma \lambda_+^i$, $|a_i^-|\le\gamma \lambda_-^i$,
$|u_i|\le\gamma \lambda_+^i$, $|\ell_i^-|\le\gamma \lambda_-^i$,
for some positive $\gamma$ and for $1/R_a<\lambda_+<1$, $r_a<\lambda_-<1$. Thus, we find that for the truncated approximation of the matrix $A$ the values of $n^+$, $n^-$, $f$, $g$ are bounded by $\log(\gamma^{-1}\epsilon^{-1})/\log (\lambda_\pm^{-1})$.

Performing numerical experiments it turns out that the singular values of the principal submatrices of the Hankel matrices $H(\ell^-)$ and $H(u^+)$ associated with power series having coefficients with an exponential decay, have an exponential decay themselves. So that also the truncation on the value of the numerical rank $k$ of $H(\ell^-)H(u^+)$ can be performed efficiently.

The analysis of the inherent error due to inversion is related to the analysis of the condition number of semi-infinite Toeplitz matrices. We do not carry out this analysis, we refer the reader to the books \cite{bottcher2005spectral}, \cite{bottcher2012introduction} on this regard.

Now consider the more general case of the matrix $A=T(a)+F_aG_a^T$ which we assume already in its truncated form. Assume $T(a)$ invertible and write $A=T(a)(I+T(a)^{-1}F_aG_a^T)$.
Denoting for simplicity $U=T(u)$, $L=T(\ell)$ we have
\[\begin{split}
&(T(a)+F_aG_a^T)^{-1}=  
T(a)^{-1}-
L^{-1}(U^{-1}F_a)Y^{-1}(G_a^TL^{-1})U^{-1},\\
 &Y=I+G_a^TL^{-1}U^{-1}F_a,
\end{split}\]
where $Y$ is a finite matrix which is invertible if and only if $A$ is invertible.
This way, the algorithm for computing $A^{-1}$ in its finite $QT$-matrix representation is given by the following steps:
\begin{enumerate}
\item compute the spectral factorization $a(z)=u(z)\ell(z)$;
\item compute the coefficients of the power series $\wt u(z)=1/u(z)$ and $\wt \ell(z)=1/\ell(z)$, so that $L^{-1}=T(\wt \ell)$, $U^{-1}=T(\wt u)$;
\item represent the matrix $H=L^{-1}U^{-1}$ as $T(c)+F_hG_h^T$, where $c(z)=\wt \ell(z)\wt u(z)$ by means of  Theorem \ref{th:1};
\item compute the products: $G_1=T(\wt\ell)G_a$, $F_1=T(\wt u)F_a$;
\item compute $Y=I+G_1^TF_1$, $F_2=F_1Y^{-1}$, $F_3=T(\wt\ell)F_2$, $G_2=T(\wt u)G_1$;
\item output the coefficients of $c(z)$ and the matrices $F_c=[F_h, F_3]$, $G_c=[G_h,G_2]$.
\end{enumerate}

For computing the spectral factorization of $a(z)$ we rely on the algorithm of \cite{bfgm} which employs  evaluation/interpolation techniques at the Fourier points.

\subsection{Compression}
Given the matrix $E$ in the form $E=FG^T$ where $F$ and $G$ are matrices of size $m\times k$ and $n\times k$, respectively, we aim to reduce the size $k$ and to approximate $E$ in the form $\wt F\wt G^T$ where $\wt F$ and $\wt G$ are matrices of size $m\times \wt k$ and $n\times \wt k$, respectively, with $\wt k<k$.

We use the following procedure. Compute the pivoted (rank-revealing) QR factorizations
$F= Q_fR_fP_f$, $G= Q_gR_gP_g$, where $P_f$ and $P_g$ are permutation matrices, $Q_f$ and $Q_g$ are orthogonal and $R_f$, $R_g$ are upper triangular; remove the last negligible rows from the matrices $R_f$ and $R_g$,   remove the corresponding columns of $Q_f$ and $Q_g$. 
In this way we obtain matrices $\hat R_f$, $\hat R_g$, $\hat Q_f$, $\hat Q_g$ such that,  up to within a small error, satisfy the equations $F= \hat Q_f\hat R_fP_f$, $G= \hat Q_g\hat R_gP_g$. Then, in the factorization $FG^T=\hat Q_f(\hat R_fP_fP_g^T\hat R_g^T)\hat Q_g^T$, compute the SVD of the matrix in the middle
$ \hat R_fP_fP_g^T\hat R_g^T=U\Sigma V^T$, and replace $U,\Sigma$ and $V$ with matrices $\hat U$, $\hat\Sigma$, $\hat V$, obtained by removing the singular values $\sigma_i$ and the corresponding singular vectors if   $\sigma_i<\epsilon\sigma_1$, where $\epsilon$ is a given tolerance. In output, the matrices $\wt F=\hat Q_f \hat U\hat \Sigma^{1/2}$, $\wt G=\hat Q_g\hat V\hat \Sigma^{1/2}$ are delivered.

\subsection{The Matlab code}
The arithmetic operations on CQT-matrices have been implemented in Matlab.
The package can be obtained upon request from the authors.
 It includes the functions
{\tt qt\_add}, {\tt qt\_mul}, {\tt qt\_inv}, {\tt qt\_compress} for performing matrix arithmetic and compression. A CQT-matrix $A$ is stored by means of the variables {\tt am, ap, aF, aG}, where {\tt am} and {\tt ap} are the vectors containing the coefficients of the Laurent polynomial $a(z)=\sum_{i=-h}^ka_iz^i$ so that
{\tt am = }$(a_0,a_{-1},\ldots,a_{-h})$,
{\tt ap = }$(a_{0},a_1,\ldots,a_k)$, the variables {\tt aF} and {\tt aG} contain the values of the non-negligible entries in the correction matrices $F$ and $G$, respectively.

In each function, after performing an arithmetic operation, the compression of the matrices $F$ and $G$ is applied.

\section{Solving certain semi-infinite quadratic matrix equations by means of Cyclic Reduction}\label{sec:qbd}
In the analysis of certain QBD queueing processes like the tandem Jackson queue \cite{jackson1957networks} or bi-dimensional random walks in the quarter plane \cite{miyazawa2011light,kobayashi2012revisit}, one has to find the invariant probability vector  of a stochastic process with a discrete two-dimensional state space. The two coordinates of the latter ---usually called level and phase---  are both countably infinite. Typically, the allowed transitions from a state are limited to a subset of the adjacent states. Moreover, the probability of a certain transition is homogeneous in time and ---except for some boundary conditions--- depends only on the distance between the departure and the arrival. This makes the model representable with a generator  of the kind
\[
Q=\begin{bmatrix}\widehat A_0&\widehat A_1\\
A_{-1}&A_0&A_1\\
      &A_{-1}&A_0&A_1\\
      &&\ddots&\ddots&\ddots\end{bmatrix},\quad A_i,\widehat A_i\in\mathcal{CQT}.
\]

The matrix analytic methods, designed in this framework for finding the invariant distribution, require to find
the minimal non negative solutions $G$ and $R$ of the semi-infinite matrix equations
\begin{equation}\label{eq:mateq}
A_1X^2+A_0X+ A_{-1}=0,\quad X^2A_{-1}+XA_0+A_1=0,
\end{equation}
respectively.
It can be proved that, under very mild assumptions, the minimal non-negative solutions of the above equations exist and are unique.
We refer to the books \cite{bini2005numerical},
\cite{lr:book},
\cite{neuts1981matrix}, for more details.

When the blocks $A_i$s are finite, one of the most reliable and fast algorithms for performing this computation  is the Cyclic Reduction (CR) \cite{bini2009cyclic,hockney1965fast,buzbee1970direct}. This is an iterative method based on generating the following matrix sequences
\begin{equation}\label{cr}
\begin{split}
&A_0^{(h+1)}=A_0^{(h)}-A_1^{(h)}S^{(h)}A_{-1}^{(h)}-A_{-1}^{(h)}S^{(h)}
A_1^{(h)},\quad S^{(h)}=(A_0^{(h)})^{-1},\\
&A_1^{(h+1)}=-A_1^{(h)}S^{(h)}A_1^{(h)},\quad A_{-1}^{(h+1)}=-A_{-1}^{(h)}S^{(h)}A_{-1}^{(h)},\\
& \wt A^{(h+1)}=\wt A^{(h)}-A_{-1}^{(h)}S^{(h)}A_1^{(h)},
\quad
\widehat A^{(h+1)}=\widehat A^{(h)}-A_{1}^{(h)}S^{(h)}A_{-1}^{(h)},
\end{split}
\end{equation}
for $h=0,1,2\ldots$,
with $A_0^{(0)}=\wt A^{(0)}=\widehat A^{(0)}=A_0$, $A_1^{(0)}=A_1$, $A_{-1}^{(0)}=A_{-1}$. 
The sequences
\begin{equation}\label{eq:Gh}
G^{(h)}:=-(\wt A^{(h)})^{-1}A_{-1},\quad R^{(h)}:=-A_1(\widehat A^{(h)})^{-1}
\end{equation}
 converge to the minimal non-negative solutions $G$ and $R$ of the matrix equations \eqref{eq:mateq}. 

 These convergence properties are valid also in the case where the blocks $A_{-1}$, $A_0$, $A_1$ are semi-infinite where convergence holds component-wise. We refer the reader to \cite{lato:varese} for more details.

The arithmetic developed in Section \ref{sec:arith} paves the way to the use of CR when $A_i\in\mathcal{CQT}$, $i=-1,0,1$. Observe that, since $\mathcal {CQT}$ is an algebra, all the matrices generated by CR belong to $\mathcal {CQT}$. Moreover, the Toeplitz part of these matrices have associated symbols $a_{-1}^{(h)}(z)$, $a_0^{(h)}(z)$, 
$a_1^{(h)}(z)$,  $\wt a^{(h)}(z)$, $\widehat a^{(h)}(z)$, which satisfy the same recurrence equations as \eqref{cr}. More precisely we have
 the {\em scalar} functional relations
\[
\begin{split}
&a^{(h+1)}_0(z)= a^{(h)}_0(z)-2a^{(h)}_1(z)a^{(h)}_{-1}(z)/a^{(h)}_0(z),\\
&a^{(h+1)}_1(z)=-a^{(h)}_1(z)^2 /a^{(h)}_0(z),\quad a^{(h+1)}_{-1}(z)=-a^{(h)}_{-1}(z)^2 /a^{(h)}_0(z),\\
&\wt a^{(h+1)}(z)=\wt a^{(h)}(z)-a^{(h)}_{1}(z) a^{(h)}_{-1}(z)/a^{(h)}_0(z),
\end{split}
\]
with $h=0,1,\ldots$, where $a_i^{(0)}(z)=a_i(z)$, $i=-1,0,1$  and $\wt a^{(0)}(z)=a_0(z)$. Observe that since all the quantities in the above recurrence are scalar functions, they commute so that $\widehat a^{(h)}(z)$ coincides with 
$\wt a^{(h)}(z)$.

As pointed out in \cite{bgm:laa}, \cite{bini2009cyclic}, in the scalar case CR reduces to the celebrated Graeffe iteration whose properties have been investigated in \cite{ostro1}. Thus,
in order to analyze the convergence of the sequences defined above, we
rely on the convergence properties of the Graeffe iteration
applied to quadratic polynomials. In particular, we
know that if, for a given $z\in\mathbb T$ the polynomial
$p_z(x):=a_1(z)x^2+a_0(z)x+a_{-1}(z)$ associated with the triple $(a_{-1}(z),a_0(z),a_1(z))$, has one root inside the unit disk and one
root outside, then the sequence $-(a_{-1}(z)/ \wt a^{(h)}(z))$ has a limit $g(z)$ which coincides with the root of the polynomial $p_z(x)$ inside the unit disk.

The following theorem provides mild conditions which ensure the above properties, and are generally satisfied in the applications.

\begin{theorem}\label{thm:delta}Let $a_i(z)=a_{i,-1}z^{-1}+a_{i,0}+a_{i,1}z$, for $i=-1,0,1$, be such that
$\sum_{i,j=-1}^1 a_{i,j}=0$, $a_{0,0}< 0$, $a_{i,j}\ge 0$, otherwise. If 
\begin{itemize}
\item[\rm (i)]  $ a_{-1,0}>0$ or $a_{1,0}>0$,
\item[\rm (ii)] $a_{ij}\neq 0$ for at least a pair $(i,j)$, with $j\ne 0$,
\end{itemize} then for any $z\in\mathbb T$, $z\ne 1$,  the quadratic polynomial  $p_z(x)=a_1(z)x^2+a_0(z)x+a_{-1}(z)$, has a root of modulus less than 1 and a root of modulus greater than 1.
\end{theorem}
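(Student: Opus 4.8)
The plan is to reduce the statement to a winding-number count for the Laurent polynomial
\[
\phi(x,z):=\tfrac{1}{x}\,p_z(x)=a_1(z)x+a_0(z)+a_{-1}(z)x^{-1}=\sum_{i,j=-1}^{1}a_{i,j}\,x^iz^j ,
\]
exploiting the generator structure of the coefficient array. First I would use the normalization $\sum_{i,j}a_{i,j}=0$ to eliminate the diagonal, writing $a_{0,0}=-\sum_{(i,j)\neq(0,0)}a_{i,j}$, whence
\[
\phi(x,z)=\sum_{(i,j)\neq(0,0)}a_{i,j}\,(x^iz^j-1).
\]
For $|x|=|z|=1$ every monomial $x^iz^j$ lies on $\mathbb T$, so $\operatorname{Re}(x^iz^j-1)\le 0$ with equality exactly when $x^iz^j=1$; since all surviving coefficients $a_{i,j}$ are nonnegative, this yields $\operatorname{Re}\phi(x,z)\le 0$, with equality only if $x^iz^j=1$ for every $(i,j)$ with $a_{i,j}>0$.

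The heart of the argument, and the step I expect to be the main obstacle, is to upgrade this to a \emph{strict} inequality for every $z\in\mathbb T$ with $z\neq 1$ and every $x$ on the unit circle; this is exactly where hypotheses (i) and (ii) are calibrated to be used. Suppose $\operatorname{Re}\phi(x,z)=0$ for some $|x|=1$. Then $x^iz^j=1$ for all $(i,j)$ with $a_{i,j}>0$. Hypothesis (i) supplies a positive coefficient among $a_{1,0},a_{-1,0}$, i.e.\ a monomial $x^{\pm1}$, forcing $x=1$; substituting $x=1$ into a monomial furnished by hypothesis (ii), which has $j=\pm1$, then forces $z^{\pm1}=1$, i.e.\ $z=1$, a contradiction. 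Consequently, for every $z\in\mathbb T\setminus\{1\}$ the image of the unit circle under $x\mapsto\phi(x,z)$ lies in the open left half-plane $\{\operatorname{Re}<0\}$.

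With this confinement the count is immediate. The open left half-plane is convex and avoids the origin, so the closed curve $\theta\mapsto\phi(e^{\mathbf i\theta},z)$ has winding number $0$ about $0$; since $p_z(x)=x\,\phi(x,z)$ and the factor $x$ contributes winding number $1$, the curve $\theta\mapsto p_z(e^{\mathbf i\theta})$ winds exactly once around $0$. As $p_z$ cannot vanish on $|x|=1$ (because $\operatorname{Re}\phi<0$ and $x\neq0$ there), the argument principle gives exactly one root of $p_z$ in $|x|<1$, producing the root of modulus less than $1$.

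For the root of modulus greater than $1$ I would apply the identical argument to the reversed polynomial $\widetilde p_z(x):=a_{-1}(z)x^2+a_0(z)x+a_1(z)$, whose coefficient array $b_{i,j}=a_{-i,j}$ again satisfies $\sum_{i,j}b_{i,j}=0$, $b_{0,0}<0$, and $b_{i,j}\ge0$ otherwise, and which inherits (i) and (ii) since both are invariant under $i\mapsto-i$. Hence $\widetilde p_z$ also has exactly one root in $|x|<1$; because $\widetilde p_z(x)=x^2p_z(1/x)$, its roots are the reciprocals of those of $p_z$, so $p_z$ has exactly one root of modulus greater than $1$. When $\deg p_z=2$ (that is, $a_1(z),a_{-1}(z)\neq0$) the two counts account for both finite roots and give precisely the claimed splitting, while the degenerate case $a_1(z)=0$ corresponds to a root escaping to infinity and may be recorded separately.
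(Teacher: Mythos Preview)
Your argument is correct. Both your proof and the paper's hinge on the same structural observation---writing $a_{0,0}=-\sum_{(i,j)\neq(0,0)}a_{i,j}$ and exploiting that for $|x|=|z|=1$ the monomials $x^iz^j$ lie on $\mathbb T$---but the organization differs. The paper phrases the ``no roots on $|x|=1$'' step as a convex-combination argument (rewriting $p_z(x)+x=x$ and observing the left side is a genuine convex combination of at least two distinct points of $\mathbb T$, hence lies strictly inside the disk), then invokes Rouch\'e's theorem by proving $|a_0(z)|>|a_{-1}(z)|+|a_1(z)|$ for $z\neq 1$ to count the zero inside. Your route packages both steps into a single inequality $\operatorname{Re}\phi(x,z)<0$ on $|x|=1$, from which non-vanishing on the circle and the winding-number count follow simultaneously via the argument principle; the convex-combination and real-part arguments are equivalent reformulations, and Rouch\'e versus winding number are standard alternatives. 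Your treatment is slightly more symmetric---you explicitly run the reversed polynomial to locate the outer root, whereas the paper implicitly deduces it from ``degree two, one inside, none on the circle''---and you flag the degenerate case $a_1(z)=0$, which the paper does not discuss. Either way the hypotheses (i) and (ii) enter at exactly the same point: (i) pins down $x=1$, (ii) then forces $z=1$.
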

\begin{proof} Without loss of generality we may assume that the entries $a_{i,j}$ belong to the interval $[-1,1]$. If not, we may scale equation \eqref{eq:mateq} by a suitable constant and reduce it to this case.
As a first step we show that there are no roots of modulus $1$. Assume by contradiction that $x$ is a root of modulus 1. Obviously,  we have $p_z(x)=0$ if and only $p_z(x)+x=x$. Observe that, if $z\in\mathbb T$,  the left hand-side of the previous equation is a convex combination of the points in the discrete set $\mathcal C_{x,z}:=\{x^iz^j,\ i=0,1,2,\ j=-1,0,1\}\subset\mathbb T$. If $z\ne 1$, condition (i) and the fact that $-1\le a_{0,0}< 0$ ensure that the convex combination involves at least two different points of the unit circle, either $x$ and $1$ or $x$ and $x^2$.  Therefore, this convex combination $p_z(x)+x$ is equal to a point which belongs to the interior of the unit disc. This contradicts the fact that $|p_z(x)+x|=|x|=1$. This argument excludes roots on $\mathbb T$ for  $z\in\mathbb T\setminus\{1\}$. We conclude by showing that there is exactly one root of modulus less than $1$.  In order to prove this, we first show that $|a_0(z)|>|a_{-1}(z)+a_1(z)|$ holds for any $z\in\mathbb T\setminus \{1\}$. Therefore, by applying the Rouch\'e Theorem one finds that the functions $f(x)=a_0(z)x$ and $p_z(x)$ have the same number of zeros in the open unit disc. To prove the inequality $|a_0(z)|>|a_{-1}(z)+a_1(z)|$ we observe that
\begin{align*}
| a_{ 0,-1}z^{-1}+ a_{ 0,0}+ a_{0,1}z|&\ge|a_{ 0,0}|-| a_{ 0,-1}z^{-1}|-| a_{0,1}z|=-a_{0,0}-a_{ 0,-1}-a_{0,1}\\&= a_{-1,-1}+ a_{ -1,0}+ a_{ -1,1}+a_{1,-1}+ a_{ 1,0}+ a_{ 1,1}
\\
&\ge| a_{-1,-1}z^{-1}+ a_{ -1,0}+ a_{ -1,1}z+a_{1,-1}z^{-1}+ a_{ 1,0}+ a_{ 1,1}z|
\end{align*}
where at least one of the two above inequalities is strict because of condition (ii).
\end{proof}

\begin{corollary}
Under the conditions of Theorem~\ref{thm:delta}, if $a_1(z)\ne 0$ for any $z\in\mathbb T$ and $a_{-1}(1)\neq a_1(1)$, then $g(z)=\lim_h -a_1(z)/\wt a^{(h)}(z)$ is an analytic function.  
\end{corollary}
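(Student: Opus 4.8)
The plan is to recognize $g$ as an algebraic function, namely a branch of the roots of a fixed quadratic whose coefficients are Laurent polynomials (hence analytic on $\mathbb C\setminus\{0\}$), and then to establish analyticity by the holomorphic implicit function theorem. The only delicate point is the value $z=1$, where a root of the relevant quadratic sits exactly on $\mathbb T$.

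First I would identify $g$. From the convergence recalled before Theorem~\ref{thm:delta}, $-a_{-1}(z)/\wt a^{(h)}(z)$ tends to the root $\xi(z)$ of $p_z$ inside $\mathbb T$; since the product of the two roots of $p_z$ is $a_{-1}(z)/a_1(z)$, the outside root is $\eta(z)=a_{-1}(z)/(a_1(z)\xi(z))$ and $\wt a^{(\infty)}(z)=-a_{-1}(z)/\xi(z)=-a_1(z)\eta(z)$. Hence
\[
g(z)=\lim_h \frac{-a_1(z)}{\wt a^{(h)}(z)}=\frac1{\eta(z)},
\]
which, dividing $p_z(\eta)=0$ by $\eta^2$, is exactly the root of the reversed quadratic $q_z(y):=a_{-1}(z)y^2+a_0(z)y+a_1(z)$ lying inside $\mathbb T$.

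Next I would treat $z\in\mathbb T\setminus\{1\}$. The hypothesis $a_1(z)\neq 0$ on $\mathbb T$ guarantees that $p_z$ is genuinely quadratic, so Theorem~\ref{thm:delta} gives, for each such $z$, exactly one root of $p_z$ strictly inside and one strictly outside $\mathbb T$; correspondingly $q_z$ has a single simple root $g(z)$ inside $\mathbb T$, bounded away from the unit circle. Since $q_z(g(z))=0$ with $\partial_y q_z(g(z))\neq 0$ and $q_z$ depends analytically on $z$, the holomorphic implicit function theorem continues $g$ analytically to a full complex neighborhood of each $z\in\mathbb T\setminus\{1\}$, and uniqueness of the inside root forces these local branches to agree.

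The main obstacle is $z=1$. The relation $\sum_{i,j}a_{i,j}=0$ reads $a_{-1}(1)+a_0(1)+a_1(1)=0$, so $y=1$ is a root of $q_1$ on $\mathbb T$ and the inside/outside dichotomy degenerates there. This is where the second hypothesis enters: the roots of $q_1$ are $1$ and $a_1(1)/a_{-1}(1)$, distinct precisely because $a_{-1}(1)\neq a_1(1)$, so both are simple. Using $a_0(1)=-a_{-1}(1)-a_1(1)$ one computes
\[
\partial_y q_1(1)=2a_{-1}(1)+a_0(1)=a_{-1}(1)-a_1(1)\neq 0,
\]
and the $y$-derivative at the other root equals $a_1(1)-a_{-1}(1)\neq 0$. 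Thus the implicit function theorem applies at $z=1$ as well, producing an analytic root of $q_z$ through $z=1$ that coincides with $g$ for nearby $z\neq 1$ by continuity of the limit. Patching this branch with those of the previous step, and using compactness of $\mathbb T$ to extract a uniform annulus $r<|z|<R$, shows $g$ is analytic on a neighborhood of $\mathbb T$. I expect the genuine difficulty to be exactly this bookkeeping at $z=1$, that is, checking that the implicit-function branch there is the same function as the inside-root limit defined for $z\neq 1$; the derivative computation above, which is literally the hypothesis $a_{-1}(1)\neq a_1(1)$, is what removes the apparent singularity.
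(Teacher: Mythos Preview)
Your argument is correct and follows essentially the same route as the paper: both reduce analyticity of $g$ to showing that $p_z(x)$ (equivalently your reversed $q_z$) has only simple roots for every $z\in\mathbb T$, using Theorem~\ref{thm:delta} for $z\neq 1$ and the hypothesis $a_{-1}(1)\neq a_1(1)$ at $z=1$. The paper simply cites the classical fact that roots depend analytically on the coefficients away from the multiple-root locus, whereas you unwind this via the holomorphic implicit function theorem; otherwise the arguments coincide.
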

\begin{proof}
We recall that the roots of a monic polynomial are analytic functions of the coefficients, on the set where the polynomial has not multiple roots \cite{Brillinger}.  Thus, in
order to prove the analyticity of $g(z)$, it is sufficient to show that that $p_z(x)$ has no multiple root  $\forall z\in\mathbb T$.
 This follows from  Theorem~\ref{thm:delta} if 
 $z\in\mathbb T\setminus\{1\}$. Moreover, observe that for $z=1$, $p_1(x)$ has roots $1$ and $\frac{a_{-1}(1)}{a_1(1)}$ where the latter is real, non negative and different from 1 by assumption.
\end{proof}

With the information that we have collected so far, we cannot yet say if the matrix $G$ belongs to $\mathcal CQT$. In fact, in principle, writing $G=T(g)+E_g$, it is not ensured that $\|E_g\|\f<\infty$. The boundedness of $\|E_g\|\f$ can be proved if $E_g$ has all entries with the same sign. This analysis is part of the subject of our future research. On this regard, it is worth citing the paper \cite{guy} where, relying on probabilistic arguments, it is proved that the matrices $G$ and $R$ asymptotically share the Toeplitz structure.

\subsection{Numerical validation}
In order to validate our analysis, we consider ten instances of the two-node Jackson network, analyzed in \cite{Motyer-Taylor}. In details, we assume
\begin{align*}
A_{-1}&=\begin{bmatrix}
(1-q)\mu_2&q \mu_2\\
&(1-q)\mu_2&q \mu_2\\
&&\ddots&\ddots
\end{bmatrix},\\
A_0&=\begin{bmatrix}
-(\lambda_1+\lambda_2+\mu_2)&\lambda_1\\
(1-p)\mu_1&-(\lambda_1+\lambda_2+\mu_1+\mu_2)&\lambda_1\\
&\ddots&\ddots&\ddots
\end{bmatrix},\\
A_1&=\begin{bmatrix}
\lambda_2\\
p\mu_1&\lambda_2\\
&\ddots&\ddots
\end{bmatrix},
\end{align*}
where the parameters $p,q,\lambda_1,\lambda_2,\mu_1,\mu_2$ are chosen according to Table~\ref{tab:param1}. These examples are also studied in \cite{Sakuma-Miyazawa} where it is shown the bad effect of truncation in approximating the stationary distribution. Different decay properties of the invariant probability distribution correspond to the different values of the parameters.

We have applied CR in all the 10 cases and computed the minimal non-negative solution $G$ represented in the CQT form as $T(g)+U_gV_g^T$. In the results of the tests that we have performed, we report, besides the CPU time in seconds, also the
norm of the residual error $E=A_1 G^2+A_0 G+A_{-1}$ where we used both the infinity norm $\|E\|_\infty$ and the CQT norm $\|E\|\aqt$.

In order to analyze the intrinsic complexity of the problem, we also report
the
 band width of the matrix $T(g)$, that is the number of non-negligible coefficients of the Laurent series $\sum_{i\in\mathbb Z}g_iz^i$, the number of the nonzero rows of the matrices $U_g$ and $V_g$ and the number of their columns that is their rank.

 All this information is reported in Table \ref{tab:result1}.  We may
 observe that a high CPU time, like for instance in the case of
 Problem 7, corresponds to large values of the band width in the
 matrix $T(g)$ or to large sizes of the correction. The large values
 of these two components of the CQT representation of $G$ imply that
 the entries $g_{i,j}$ have a low decay speed as $i,j\to \infty$.

\begin{table}
         \centering
         \pgfplotstabletypeset[
   	      columns={0,1,2,3,4,5,6}, 
   	      columns/0/.style={
   		      column name = Case	
   	      },
columns/1/.style={
column name = $\lambda_1$	
},
   	      columns/2/.style={
   	         		      column name = $\lambda_2$	
   	         	      },
   	      columns/3/.style={
   	         		      column name = $\mu_1$	
   	         	      },
   	      columns/4/.style={
   	         		      column name = $\mu_2$	
   	         	      },   
   	      columns/5/.style={
   	         	          column name = $p$	
   	         	      },
   	      columns/6/.style={
   	         	          column name = $q$	
   	         	      }   	      	             ]{parameters.txt}
         \caption{Parameters values of the test examples for the two node Jackson tandem network}
         \label{tab:param1}
       \end{table}

\begin{table}
         \centering
         \pgfplotstabletypeset[          
   	      columns={0,1,2,3,4,5,6,7}, 
   	      columns/0/.style={
   		      column name = Case	
   	      },
columns/1/.style={
column name = CPU time,
postproc cell content/.style={
   	      		/pgfplots/table/@cell content/.add={}{}
   	      		}
   	      	},
   	      columns/2/.style={
   	         		      column name = $Res_{\infty}$	
   	         	      },
   	      columns/3/.style={
   	         		      column name = $Res\aqt$ },   	         	      
   	      columns/4/.style={
   	         		      column name = Band 
   	         	      },
   	      columns/5/.style={column name =  Rows
}, 
columns/6/.style={   	         	         	column name = Columns	
},   
   	      columns/7/.style={
   	         	          column name =  Rank	
   	         	      }  	      	             ]{jackson.txt}
         \caption{Features of the computed solutions by means of CR}
         \label{tab:result1}
       \end{table}

\section{Conclusion}\label{sec:conc}
We have introduced the class of semi-infinite quasi-Toeplitz matrices and proved that it is a Banach space with a suitable norm. Then we have considered the subspace formed by quasi-Toeplitz matrices associated with a continuous symbol $a(z)$ such that $a'(z)\in\mathcal W$, and proved that it is a Banach algebra where the norm is sub-multiplicative. These properties have been used to define a matrix arithmetic on the algebra of semi-infinite CQT matrices. These tools have been used to design methods for solving quadratic matrix equations with semi-infinite matrix coefficients encountered in the analysis of a class of QBD stochastic processes.

\section*{Acknowledgments}
Dario Bini wishes to thank Bernhard Beckermann and Daniel Kressner for very helpful discussions on topics related to the subject of this paper.

 \bibliographystyle{abbrv}
 
\end{document}